\theoremstyle{plain}
\newtheorem{thm}{Theorem}[section]
\newtheorem{prop}[thm]{Proposition}
\newtheorem{lem}[thm]{Lemma}
\newtheorem{cor}[thm]{Corollary}
\theoremstyle{definition}
\newtheorem{df}[thm]{Definition}
\newcommand{\N}{\mathbb{N}}
\newcommand{\R}{\mathbb{R}}
\newcommand{\dd}[1]{\ensuremath{\operatorname{d}\!{#1}}}
\newcommand{\sui}{_{n\geq 1}}
\newcommand{\fcar}{\mathbbm{1}}
\newcommand{\tend}{\rightarrow}
\newcommand{\nti}{n\tend\infty}
\newcommand{\Gad}{\mathrm{GL}(d,\R)\ltimes \R^d}
\newcommand{\Gsad}{\mathrm{SL}(d,\R)\ltimes \R^d}
\newcommand{\Gl}{\mathrm{GL}(V)}
\newcommand{\ma}{random walk\xspace}
\newcommand{\sm}{\sigma_{\mu}}
\newcommand{\Xkd}{X_{k,\,d}}
\newcommand{\lun}{\lambda_1}
\newcommand{\lpun}{\lambda^{\prime}_1}
\newcommand{\Gr}{\mathrm{Gr}}
\DeclareMathOperator{\Ker}{Ker}
\DeclareMathOperator{\Ima}{Im}
\DeclareMathOperator{\Supp}{Supp}
\numberwithin{equation}{section}
\def\bq{\begin{quotation}}
\def\eq{\end{quotation}}
\begin{document}

\title{Recurrence on Affine Grassmannians}
\author{Yves Benoist and Caroline Bru\`ere}
\date{}
\maketitle
\selectlanguage{english}

\begin{abstract} 
We study the action of the affine group $G$ of $\R^d$
on the space $X_{k,\,d}$ of $k$-dimensional affine subspaces. 
Given a compactly-supported 
Zariski dense probability measure $\mu$ on $G$, 
we show that $X_{k,\,d}$ supports a $\mu$-stationary measure $\nu$
if and only if the $(k\!+\!1)^{\rm th}$-Lyapunov exponent
of $\mu$ is strictly negative. 
In particular, when $\mu$ is symmetric, $\nu$ exists 
if and only if $2k\geq d$. 
\end{abstract}

{\footnotesize
\tableofcontents
}
\newpage

\section{Introduction}

\subsection{Recurrence and Lyapunov Exponents}
\hspace*{1em}\let\thefootnote\relax\footnote{\hspace*{-2em}
Key words: Affine group, Grassmannian,
random walk, recurrence, stationary probability.\\
AMS-MSC : 22E40, 60J20.}
Consider a locally compact group $G$
acting continuously on a locally compact second countable space $X$
and $\mu$ a probability measure on $G$. 
The \emph{associated random walk on $X$}
is the Markov chain over $X$ 
defined by the transition probabilities 
$P_x=\mu *\delta_x$ for all $x\in X$. 
Our aim is to study the recurrence properties of such a random walk.
We will not focus here on the \emph{almost sure recurrence} 
as in \cite{BouBabEl} and \cite{Bru1}
but on the \emph{recurrence in law}
as in \cite{Art}, \cite{BouPic} and\cite{EskMarg}.

\begin{df} \label{defrectrans}
The random walk on $X$ is \emph{recurrent in law} at a point
 $x\in X$ 
if for all $\varepsilon>0$, there exists a compact set $C\subset X$
and $n_0\in\N^*$ such that for all $n \geq n_0$: 
\[\mu^{*n}*\delta_x(C)\geq 1-\varepsilon.\] 
The random walk on $X$ is \emph{uniformly recurrent in law} 
if the same compact set $C$ can be chosen for all the starting points $x$.
A probability measure $\nu$ on $X$ is said to be 
\emph{$\mu$-stationary} or \emph{$\mu$-invariant}
if one has $\mu * \nu= \nu. $
\end{df}

Those definitions are tightly linked. Indeed,
there exists a $\mu$-stationary probability measure on $X$
if and only if the random walk on $X$ is recurrent in law 
at some point $x\in X$
(see Lemma  \ref{urloimesinv} for one implication).

In this paper, $G$ will always be a real algebraic group
acting algebraically on a real algebraic variety $X$; 
the measure $\mu$ will be compactly supported and
\emph{Zariski dense}, which means that its support spans a Zariski dense subgroup 
in $G$.  

When $G$ is a reductive group and $X=G/H$ is an algebraic homogeneous space, 
it is proven in \cite{Art} that there exists a $\mu$-stationary probability measure on $X$ 
if and only if $X$ is compact. 
The aim of our article is to focus on situations 
where the algebraic group $G$ is not reductive. 
In particular, in Corollary \ref{casSL}, we will exhibit 
examples of non-compact homogeneous spaces 
on which there always exists a $\mu$-stationary
probability measure.

The key tool in our analysis will be to link 
the recurrence properties of these random walks to the Lyapunov exponents of $\mu$.
The definition of these Lyapunov exponents 
depends on the choice of a linear action
of $G$ on $\R^d$.

\begin{df}\label{deflyap} Given  
a linear action of $G$ on $\R^d$,
the \emph{Lyapunov exponents} of $\mu$ 
are the real numbers $\lambda_1,\,\hdots,\, \lambda_d$
such that, 
for all $1\leq p \leq d$, we have
\begin{equation}
\label{eqndeflya}
\lambda_1+\hdots+\lambda_p=
\lim_{\nti} \frac{1}{n} \int_G \log \n[\Lambda^p\, g] \dd\mu^{*n} (g).
\end{equation}
\end{df} 
The sequence of Lyapunov exponents is always decreasing:
$\lambda_1\geq \hdots \geq \lambda_d
$ (see \cite[Prop 1.2]{Led84}). 
More properties of these exponents are given in  \cite{Ose68}, \cite{Led84};
their use in the context of reductive groups is detailed in  
\cite{Furst63}, \cite{GuiRaug85}, \cite{GoldMarg} and \cite{Livre}.

\subsection{Action on the Affine Grassmannians}
We assume now that $G$ is either the affine group $G=\Gad$ or 
the special affine group $G=\Gsad$.
For $1\leq p\leq d$, 
we denote by $\lambda_{p}$ 
the $p^{\rm th}$-Lyapunov exponent   
corresponding to the linear action of $G$ on $\R^d$.
For instance,
in dimension $d=1$, one has
\[\textstyle\lambda_1 = \int_{\R^*\ltimes\R} \log \ab[a] \dd \mu(a,u)\]
where 
$g=(a,u)\in \R^*\ltimes\R$. 
For any $d\geq 1$, Bougerol and Picard have shown in \cite{BouPic}
that				
there exists a $\mu$-stationary probability measure on $\R^d$ 
if and only if the first Lyapunov exponent of $\mu$ is strictly negative:
$\lambda_1<0$.

The main result of this paper is the following Theorem \ref{recgrassaff}, 
which extends this equivalence 
to the affine Grassmannians $\Xkd$ where $0\leq k < d$.
By definition the affine Grassmannian $\Xkd$ is the space of
$k$-dimensional affine subspaces of $\R^d$. The group $G$ acts transitively on
$\Xkd$.

\begin{thm}\label{recgrassaff}
Let $G$ be the affine group or the special affine group of $\R^d$,  let
$\mu$ be a Zariski dense probability measure with compact support on $G$
and let $0\leq k<d$.\\ 
a) If $\lambda_{k+1}\geq 0$, then the random walk on $\Xkd$ 
is nowhere recurrent in law, 
there exists no $\mu$-stationary probability measure on $\Xkd$, and
for all $x$ in $\Xkd$ the sequence of means of transition probabilities
weakly converges to $0$:
\[\textstyle\frac{1}{n}\sum_{j=1}^n \mu^{*j}* \delta_x\xrightarrow[\nti]{} 0.\]
b) If $\lambda_{k+1}<0$, then the random walk on $\Xkd$ 
is uniformly recurrent in law, 
there exists a unique $\mu$-stationary probability measure $\nu$ on  $\Xkd$,  
and for all $x$ in $\Xkd$ the sequence of means of transition probabilities
weakly converges to $\nu$:
\[\textstyle\frac{1}{n}\sum_{j=1}^n \mu^{*j}* \delta_x\xrightarrow[\nti]{} \nu.\]
\end{thm}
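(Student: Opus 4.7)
The plan is to exploit the fibered structure of the affine Grassmannian $\Xkd$ over the linear Grassmannian $\Gr_k(\R^d)$. Writing each affine $k$-plane uniquely as $V+w$ with $V\in\Gr_k(\R^d)$ and $w\in V^\perp$, a group element $g=(A,u)\in G$ acts as
\[(V,w)\;\longmapsto\;\bigl(AV,\;\sigma(g,V)\,w+\tau(g,V)\bigr),\]
where $\sigma(g,V):V^\perp\to(AV)^\perp$ is $A$ followed by orthogonal projection onto $(AV)^\perp$ and $\tau(g,V)=\pi_{(AV)^\perp}(u)$. By standard Oseledets/Guivarc'h results for Zariski dense $\mu$ on $\Gadlin$, the top Lyapunov exponent of the linear cocycle $\sigma$ over the random walk on $\Gr_k(\R^d)$ equals $\lambda_{k+1}$, and the compact base $\Gr_k(\R^d)$ carries a unique $\mu$-stationary measure $\nu_0$.

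For part (b), with $\lambda_{k+1}<0$, iterate the recursion starting from $(V,w)$ to obtain after $n$ steps the fiber coordinate
\[\sigma(g_n\cdots g_1,V)\,w\;+\;\sum_{j=1}^n\sigma(g_n\cdots g_{j+1},V_j)\,\tau(g_j,V_{j-1}),\]
where $V_j=A_j\cdots A_1V$. Large-deviation estimates for $\sigma$ give $\|\sigma(g_n\cdots g_{j+1},V_j)\|\le C\,e^{\alpha(n-j)}$ for some $\alpha<0$, uniformly in $V$ and with probability tending to $1$. Reversing time, this sum has the same law as a backwards series with geometrically shrinking terms, which converges almost surely; the distribution of the limit is a $\mu$-stationary probability $\nu$ on $\Xkd$ projecting to $\nu_0$. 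Uniform contraction of fibers implies tightness of $\mu^{*n}\ast\delta_x$ uniformly in $x$, hence uniform recurrence in law; the same contraction forces any two $\mu$-stationary lifts of $\nu_0$ to coincide, giving uniqueness, and Cesàro convergence follows from standard ergodic arguments applied to the contracting Markov chain.

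For part (a), with $\lambda_{k+1}\geq 0$, argue by contradiction: a $\mu$-stationary $\nu$ must project to $\nu_0$ and disintegrate as $\int\nu_V\,d\nu_0(V)$, with the stationarity relation producing a family of fiber measures invariant under the affine cocycle $(\sigma,\tau)$. When $\lambda_{k+1}>0$ the cocycle $\sigma$ expands fibers exponentially, so first moments $\int\|w\|\,d(\mu^{*n}\ast\nu_V)(w)$ blow up, contradicting invariance. When $\lambda_{k+1}=0$ a central-limit argument for the driftless affine cocycle shows that the fiber coordinate has fluctuations of order $\sqrt{n}$, again precluding stationarity. In both cases $\mu^{*n}\ast\delta_x$ escapes every compact in Cesàro mean, giving non-recurrence and the claimed weak convergence to zero.

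The main obstacle is the critical case $\lambda_{k+1}=0$, where no fiberwise contraction or expansion is available. The diffusive CLT argument requires the non-triviality of the translation cocycle $\tau$, in the sense that it is not a measurable coboundary over the random walk on $\Gr_k(\R^d)$. This non-degeneracy follows from the Zariski density of $\mu$ in $G$---since a coboundary would produce a $\gm$-invariant measurable affine section of $\Xkd\to\Gr_k(\R^d)$---but extracting the quantitative diffusive lower bound uniformly in the base point is the technical heart of the argument.
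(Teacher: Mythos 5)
Your route---viewing $\Xkd$ as an affine bundle over the compact Grassmannian $\Gr_k(\R^d)$ and running a Bougerol--Picard style analysis of the fibered affine recursion---is genuinely different from the paper's, which linearizes the problem via the Pl\"ucker embedding of $\Xkd$ into $\mathbb{P}(\Lambda^{k+1}\R^{d+1})\smallsetminus\mathbb{P}(\Lambda^{k+1}\R^{d})$ and reduces everything to the ratio $\|a_n\|/\|d_n\|$ of the norms of the two diagonal blocks of the random product. Your part (b) is workable in outline: the identification of the top exponent of the quotient cocycle with $\lambda_{k+1}$ is correct, and the backward-series construction plus fiberwise contraction does yield existence, uniform recurrence in law, and (since the limit measures come out Dirac) uniqueness. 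The paper instead exhibits the explicit drift function $u_\delta([w,w'])=\|w\|^{\delta}/\|w'\|^{\delta}$ contracted by $P_\mu^{n_0}$, using only Furstenberg's uniform convergence of $\frac1n\int\log\|a_gw\|\,d\mu^{*n}$, which avoids large-deviation machinery and any uniformity issues over the base.

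Part (a), however, has a genuine gap. For $\lambda_{k+1}>0$ the ``first moments blow up'' argument is not valid as stated: the fiber measures $\nu_V$ of a putative stationary measure need not have finite first moments, so divergence of $\int\|w\|\,d(\mu^{*n}*\nu_V)$ contradicts nothing; one must argue via tightness/escape of mass (e.g.\ with $\log(1+\|w\|)$, or with almost sure divergence along the reversed walk). More seriously, the critical case $\lambda_{k+1}=0$ is exactly where your plan stops being a proof. The non-degeneracy you invoke---that the translation cocycle $\tau$ is not a measurable coboundary over the Grassmannian walk---does not follow from Zariski density by the one-line argument you give: a measurable equivariant section of $\Xkd\to\Gr_k(\R^d)$ is precisely a $\mu$-stationary measure with Dirac fibers, so ruling it out is a special case of the very statement to be proved, and upgrading measurable invariance to an algebraic contradiction requires a real argument. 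Moreover, even granting non-degeneracy, the uniform diffusive lower bound for a driftless affine cocycle over a nontrivial base is, as you concede, the heart of the matter and is not supplied. The paper avoids the translation part entirely here: it shows that a stationary measure would force $\sup_n\|a_n\|/\|d_n\|<\infty$ almost surely (via the limit measures $\nu_b$ and the fact that no proper projective subspace is charged), while $\lun=\lpun$ forces $\sup_n\|a_n\|/\|d_n\|=\infty$ almost surely, by the law of the iterated logarithm for the Cartan projection or, in the degenerate subcase, the oscillation of the centered real walk $\sum_i\delta(b_i)$; the non-degeneracy there comes from the representation-theoretic fact that $\Lambda^{k+1}\R^d$ and $\Lambda^{k}\R^d$ have distinct highest weights. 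You would need either to import such an input or to carry out the critical-case analysis of the affine cocycle in full; as written, this is a substantial missing piece.
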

The result of Bougerol and Picard in \cite{BouPic}
covers the $k=0$ case. 
In fact, their proof uses only the weaker assumption
that $\mu$ has a finite first moment and 
that its support does not preserve any proper affine
subspace of $\R^d$.

The following Corollary, which is particularly noteworthy insofar 
as it does not mention Lyapunov exponents, 
is deduced from Theorem \ref{recgrassaff}. 

\begin{cor}\label{cassymetrique} Assume $\mu$ is symmetric. 
Then there exists a $\mu$-stationary probability measure
$\nu$ on $\Xkd$ if and only if
$2k\geq d$. 
In this case, $\nu$ is unique.
\end{cor}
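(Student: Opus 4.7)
The plan is to apply Theorem~\ref{recgrassaff} to reduce the statement to a purely spectral question about $\lambda_{k+1}$. By that theorem, a $\mu$-stationary probability measure on $\Xkd$ exists, and is then unique, if and only if $\lambda_{k+1}<0$. The corollary therefore reduces to proving, for symmetric Zariski dense $\mu$, the equivalence
\[
\lambda_{k+1}<0 \iff 2k\geq d.
\]

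I would obtain this via two classical ingredients applied to the projection $\mubar$ of $\mu$ to the linear part $\Gadlin$ (resp.\ $\Gsadlin$). The first is a \emph{palindromic symmetry}: since $\mu$ is symmetric, so is $\mubar$. For a product $L_n=X_n\cdots X_1$ of i.i.d.\ copies of $\mubar$, the log singular values of $L_n^{-1}$ are the negatives, read in reverse order, of those of $L_n$. Because $\mu=\check{\mu}$, the law of $L_n^{-1}$ equals that of $L_n$, so the Oseledets theorem yields $\lambda_i=-\lambda_{d+1-i}$ for all $1\leq i\leq d$. The second is \emph{simplicity} of the Lyapunov spectrum for Zariski dense $\mu$, due to Goldsheid--Margulis: since the projection of a Zariski dense subgroup of $G$ onto the linear quotient is itself Zariski dense, one obtains strict inequalities $\lambda_1>\lambda_2>\cdots>\lambda_d$.

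Combining these, the $\lambda_i$ are strictly decreasing and symmetric about $0$. If $d=2m$, no $\lambda_i$ vanishes and $\lambda_{k+1}<0 \iff k+1\geq m+1 \iff 2k\geq d$. If $d=2m+1$, the symmetry forces $\lambda_{m+1}=0$, and $\lambda_{k+1}<0 \iff k+1\geq m+2 \iff k\geq m+1 \iff 2k\geq d$. Theorem~\ref{recgrassaff} then yields both existence and uniqueness in the announced range, and non-existence outside it.

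The main obstacle is less conceptual than bibliographic: one must cite (or briefly verify) the symmetry $\lambda_i+\lambda_{d+1-i}=0$ and the simplicity $\lambda_1>\cdots>\lambda_d$. Neither is difficult in the present setting, but both need the compact-support and Zariski-density hypotheses on $\mu$ that were already built into Theorem~\ref{recgrassaff}; once they are in hand, the corollary follows by the elementary counting above.
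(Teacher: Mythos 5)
Your proposal is correct and follows essentially the same route as the paper: reduce via Theorem~\ref{recgrassaff} to the equivalence $\lambda_{k+1}<0\Leftrightarrow 2k\geq d$, obtain $\lambda_p=-\lambda_{d+1-p}$ from the symmetry of $\mu$, and invoke simplicity of the Lyapunov spectrum for Zariski dense measures (the paper cites this as the Guivarc'h--Raugi simplicity theorem) to conclude by the parity count. The only differences are cosmetic: you spell out the $L_n^{-1}$ argument for the palindromic symmetry and the Zariski density of the linear projection, which the paper leaves implicit.
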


\begin{proof}
Since $\mu$ is symmetric, for all $1\leq p\leq d$, the Lyapunov exponents 
satisfy the equalities 
$\lambda_p=-\lambda_{d+1-p}$. Moreover, 
since $\mu$ is Zariski dense in $G$, 
it follows from the Guivarc'h-Raugi simplicity theorem that
the sequence of Lyapunov exponents is strictly decreasing:
$\lambda_1>\cdots >\lambda_d$ (see \cite[Corol. 10.15]{Livre}).
Therefore one has the equivalence 
$\lambda_{k+1}<0 \Longleftrightarrow 2k\geq d$. 
\end{proof}

\begin{cor}
\label{casSL} 
Let $d\geq 2$. 
When $G$ is the special affine group and  $k=d-1$, 
there exists a unique $\mu$-stationary probability measure on $\Xkd$. 
\end{cor}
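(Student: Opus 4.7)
The plan is to reduce the statement directly to part (b) of Theorem~\ref{recgrassaff} by verifying that, under the hypotheses of the corollary, the $d^{\rm th}$ Lyapunov exponent is strictly negative, i.e. $\lambda_{k+1} = \lambda_d < 0$.

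First I would observe that since $G = \mathrm{SL}(d,\R)\ltimes\R^d$, the linear action of $G$ on $\R^d$ factors through $\mathrm{SL}(d,\R)$. In particular, every $g$ in the support of $\mu$ acts on $\R^d$ with determinant one, so $\log|\det g|=0$. Applying the definition of the Lyapunov exponents with $p=d$ in (\ref{eqndeflya}) yields
\[
\lambda_1 + \cdots + \lambda_d \;=\; \lim_{\nti}\tfrac{1}{n}\int_G \log|\det g|\,\dd\mu^{*n}(g) \;=\; 0.
\]

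Next I would invoke the Guivarc'h--Raugi simplicity theorem, exactly as in the proof of Corollary~\ref{cassymetrique}. Since $\mu$ is Zariski dense in $G$, its projection $\bar{\mu}$ to $\mathrm{SL}(d,\R)$ is Zariski dense there, so by \cite[Corol.~10.15]{Livre} the Lyapunov spectrum is simple:
\[
\lambda_1 > \lambda_2 > \cdots > \lambda_d.
\]
Combined with $\lambda_1+\cdots+\lambda_d=0$, this forces $\lambda_d<0$ (and $\lambda_1>0$). Taking $k=d-1$, we get $\lambda_{k+1}=\lambda_d<0$, and part~(b) of Theorem~\ref{recgrassaff} then delivers the existence and uniqueness of a $\mu$-stationary probability measure on $X_{d-1,d}$.

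There is no real obstacle here: the content is already packaged in Theorem~\ref{recgrassaff} and the simplicity theorem of Guivarc'h--Raugi. The only point to be careful about is the Zariski density of the projection $\bar{\mu}$ in $\mathrm{SL}(d,\R)$, which follows immediately from the Zariski density of $\mu$ in the semidirect product $G$.
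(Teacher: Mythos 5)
Your proof is correct and follows exactly the paper's own argument: the sum of the Lyapunov exponents vanishes because $G$ acts on $\R^d$ through $\mathrm{SL}(d,\R)$, and the Guivarc'h--Raugi simplicity theorem then forces $\lambda_d<0$, so Theorem~\ref{recgrassaff}(b) applies. You have merely spelled out the details that the paper leaves implicit.
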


\begin{proof}
In this case, the sum of the Lyapunov exponents is zero.
Hence, the simpli\-city of the Lyapunov exponents
implies
$\lambda_d<0$. 
\end{proof}

For instance, when $G$ is the special affine group of $\R^2$, 
the random walk 
on the space of affine lines of $\R^2$
is always uniformly recurrent in law while  
the random walk on the space of points of $\R^2$ is nowhere recurrent in law.

\subsection{Action on $X_{V,W}$}

By embedding the affine Grassmannian $\Xkd$ of $\R^d$ in 
the projective space of a suitable exterior power $V$ of $\R^{d+1}$,
we will deduce 
Theorem \ref{recgrassaff} from the following Theorem \ref{thmeq}:

We first need two definitions.
An algebraic group $G$ is \emph{Zariski connected} 
if it is connected for the Zariski topology.
A linear action of $G$ on a vector space $W$ 
is \emph{proximal} 
if there exists a rank $1$ linear endomorphism $\pi$ of $W$
which is a limit of a sequence $\lambda_n\gamma_n$ with 
$\lambda_n>0$ and $\gamma_n$ in $\Gamma$.

\begin{thm}\label{thmeq} Let $V$ be a finite-dimensional real vector space,
$G$ a Zariski connected algebraic subgroup of $\Gl$, 
$W$ a $G$-invariant subspace of $V$ such that\\
$(H1)$ $G$ acts irreducibly and proximally on $W$ and on $W':=V/W$.\\ 
$(H2)$ The representations of $G$ in $W$ and $W'$ 
are not equivalent.\\ 
$(H3)$ $W$ has no $G$-invariant complementary subspace in $V$.\\
Let $X_{V,W}:=\Pj[V]\smallsetminus\Pj[W]$, let $\mu$ be a Zariski dense probability measure with compact support on $G$ and
let $\lun=\lambda_{1,W}$ and $\lpun=\lambda_{1,W'}$ be
the first Lyapunov exponents of $\mu$ in $W$ and $W'$ respectively. \\
a) If $\lun\geq \lpun$, then the \ma on $X_{V,W}$ 
is nowhere recurrent in law, 
there exists no $\mu$-stationary probability measure on $X_{V,W}$,
and for all $x$ in $X_{V,W}$ one has the weak convergence
$\textstyle\frac{1}{n}\sum_{j=1}^n \mu^{*j}* \delta_x\xrightarrow[\nti]{} 0.$\\
b) If $\lun<\lpun$, 
then the \ma on $X_{V,W}$ is uniformly recurrent in law,
there exists a unique 
$\mu$-stationary probability measure $\nu$ on  $X_{V,W}$,  
and for all $x$ in $X_{V,W}$, one has the weak convergence
$\textstyle\frac{1}{n}\sum_{j=1}^n \mu^{*j}* \delta_x\xrightarrow[\nti]{} \nu.$
\end{thm}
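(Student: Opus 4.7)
The plan is to realize $X_{V,W}$ as an affine bundle over the projective space $\mathbb{P}(W')$ and to compare the random walk on $X_{V,W}$ with the classical Furstenberg walk on the base. Choose (using (H3)) a vector-space complement $W''$ to $W$ in $V$, so that $W'=V/W$ is linearly identified with $W''$. The quotient map $\pi\colon V\to W'$ induces a $G$-equivariant projection $\sigma\colon X_{V,W}\to \mathbb{P}(W')$ sending $[v]$ to $[\pi(v)]$, whose fibers are affine copies of $W$. Introduce the continuous proper Lyapunov function
\[\varphi([v])\;=\;\log\frac{\|v\|}{\|\pi(v)\|},\qquad [v]\in X_{V,W},\]
which is non-negative for suitable norms and tends to $+\infty$ exactly as $[v]$ approaches $\mathbb{P}(W)$.

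Next, exploit the base dynamics: hypothesis (H1) and Zariski density of $\mu$ allow one to apply classical Furstenberg theory on $\mathbb{P}(W')$ (see \cite{Livre}), yielding a unique $\mu$-stationary probability $\nu'$ and weak Cesàro convergence to $\nu'$ from any starting point. To lift this to $X_{V,W}$, analyze a trajectory $v_n=g_n\cdots g_1 v$ decomposed as $v_n=w_n+w''_n\in W\oplus W''$. The $W''$-component realizes the linear walk on $W'$ of top Lyapunov exponent $\lpun$; the $W$-component obeys the inhomogeneous recursion $w_n=g_n w_{n-1}+(g_n w''_{n-1})_W$, whose solution
\[w_n\;=\;g_n\cdots g_1\,w_0\;+\;\sum_{j=1}^n (g_n\cdots g_{j+1})\,(g_j w''_{j-1})_W\]
can be controlled by combining compactness of $\Supp\mu$ with Oseledets-type upper bounds $\|g_n\cdots g_{j+1}\big|_W\|\lesssim e^{(n-j)\lun+o(n)}$ and $\|w''_j\|\lesssim e^{j\lpun+o(n)}$; this recovers the asymptotics of $\varphi([v_n])=\log\|v_n\|-\log\|w''_n\|$ purely in terms of the sign of $\lun-\lpun$.

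In case (b), $\lun<\lpun$, the forcing sum dominates and $\|w_n\|/\|w''_n\|$ stays bounded along trajectories. A quantitative version furnishes a Foster--Lyapunov drift inequality
\[P_\mu^{n_0}\varphi([v])\;\leq\;\varphi([v])-\alpha+C\,\fcar_K([v])\]
for some compact $K\subset X_{V,W}$ and $n_0$ large. This delivers uniform recurrence in law, hence (via Lemma \ref{urloimesinv}) a $\mu$-stationary probability $\nu$ on $X_{V,W}$. Uniqueness follows from $\sigma_*\nu=\nu'$ together with a fiberwise contraction argument in the spirit of Bougerol--Picard: the averaged fiber dynamics has negative ``exponent'' $\lun-\lpun<0$, forcing at most one invariant probability per fiber; weak Cesàro convergence to $\nu$ then follows from uniqueness combined with uniform recurrence. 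In case (a), $\lun\geq\lpun$, the same recursion yields $\esp\,\varphi([v_n])\to+\infty$—linearly if $\lun>\lpun$, logarithmically in the boundary case (from the $\sum_j 1$ factor)—so the laws $\muen\ast\delta_x$ lose mass on every compact subset of $X_{V,W}$, the Cesàro means converge weakly to $0$, and no $\mu$-stationary probability on $X_{V,W}$ can exist.

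The main obstacles I expect are, in order: making the recursive estimate above uniform in the starting point (required to upgrade the asymptotics into the Foster--Lyapunov inequality rather than a mere almost-sure statement), handling the borderline case $\lun=\lpun$ in (a) where the divergence of $\varphi$ is only logarithmic, and proving uniqueness in (b)—which is where hypothesis (H2) enters essentially, forbidding a non-trivial $G$-equivariant splitting of the trajectories' Oseledets spaces between $W$ and $W'$ that would otherwise permit a second invariant fiberwise distribution. The compactness of $\Supp\mu$ together with the proximality in (H1) should supply the large-deviation and exponential-mixing control needed to pass from asymptotic Lyapunov bounds to the genuinely uniform drift estimates.
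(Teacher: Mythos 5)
Your architecture for part (b)'s recurrence is essentially the paper's: the function $\varphi([v])=\log\bigl(\|v\|/\|\pi(v)\|\bigr)$ is the logarithm of the paper's $u_\delta$, and the drift computation is the same. But note that an additive (Foster) inequality $P_\mu^{n_0}\varphi\leq \varphi-\alpha+C\,\fcar_K$ only controls Ces\`aro averages of return probabilities; it does not by itself give \emph{uniform recurrence in law} in the sense of Definition \ref{defrectrans}, which requires $\mu^{*n}*\delta_x(C)\geq 1-\varepsilon$ for \emph{every} large $n$. The paper closes this by exponentiating: for small $\delta>0$ the function $u_\delta=e^{\delta\varphi}$ satisfies the multiplicative contraction $P_\mu^{n_0}u_\delta\leq a\,u_\delta+b$ with $a<1$ (Proposition \ref{PmuUCH}), which is the form needed by Proposition \ref{UCHurloi}. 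This is a fixable but real omission.

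The serious gap is in part (a), above all in the critical case $\lun=\lpun$. You claim $\esp\,\varphi([v_n])\to+\infty$ ``logarithmically from the $\sum_j 1$ factor'' and deduce escape of mass. Neither step holds: divergence of the expectation of a proper function does not imply that the laws leave every compact set (a vanishing-probability excursion to infinity can blow up the mean while the distributions stay tight); and when $\lun=\lpun$ the quantity $\log\|a_n\|-\log\|d_n\|$ is a centered random-walk-type variable whose expectation is $o(n)$ and need not diverge at all. What is actually true, and all the paper proves, is $\sup_n \|a_n\|/\|d_n\|=\infty$ almost surely (Lemma \ref{liminfdnaninfinie}), obtained via the Law of the Iterated Logarithm for the Cartan projection of the reductive quotient $\overline{G}$, and, in the fully degenerate subcase, via the recurrence of centered one-dimensional random walks --- this is precisely where $(H2)$ (distinctness of the highest weights $\chi,\chi'$) is needed to guarantee the increment is not identically zero. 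Moreover an unbounded $\limsup$ along a subsequence does not by itself kill stationary measures; you are missing the bridge supplied by Lemma \ref{liminfdnanfinie}: the existence of a stationary probability $\nu$ on $X_{V,W}$ forces, through the limit measures $\nu_b=\lim (b_1\cdots b_n)_*\nu$ and the fact that $\nu$ charges no proper projective subspace (Lemma \ref{pasdemasseausousespace}), the almost sure \emph{boundedness} of $\|a_n\|/\|d_n\|$. Without this (or an equivalent), the case $\lun=\lpun$ cannot be concluded; the convergence of Ces\`aro means to $0$ is then deduced from non-existence of stationary probabilities, not from expectation asymptotics. Finally, your fiberwise-contraction sketch for uniqueness is a plausible alternative to the paper's route (joining measure, uniqueness of the Furstenberg measure on $\Pj[W']$, and absence of stationary measures on $Y_{V,W}$ via the classification on $\Pj[W\oplus W']$ and Lemma \ref{pasdorbitescompactesdansX}), but as written it is only a sketch and misattributes the role of $(H2)$, which in the paper enters through the highest weights, not through an ``invariant fiberwise distribution.''
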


\subsection{Strategy of the Proof}

In Chapter \ref{passage}, we explain how to embed the affine Grassmannian
$X_{k,d}$ in the variety 
$\Pj[\Lambda^{k+1}\R^{d+1}]\smallsetminus\Pj[\Lambda^{k+1}\R^{d}]$ 
and we deduce
Theorem \ref{recgrassaff} from Theorem \ref{thmeq}.

The last three
chapters 
will deal with the proof of Theorem \ref{thmeq}.
 
In Chapter \ref{thmeqrecloi}, we prove the uniform recurrence in law when $\lun < \lpun$
(Corollary \ref{l1lp1urloi}). The crux of the proof is the construction of a proper function 
on $X_{V,W}$ which is contracted by the averaging operator
(Proposition \ref{PmuUCH}). 

In Chapter \ref{l1lp1nrecloi}, 
we prove the non-recurrence in law when $\lun\geq \lpun$ (Proposition \ref{l1lp1nonrecloiprop}). 
The key point is the study of the ratio of the norms in $W$ and in $W'$  of a random product $b_1\cdots b_n$. On the one hand, the existence of a $\mu$-stationary probability measure
on $X_{V,W}$ would imply that these ratios are bounded
(Lemma \ref{liminfdnanfinie}). 
On the other hand, when $\lun\geq \lpun$, the Law of Large Numbers and the Law of Iterated Logarithms for these products
prevent these ratios from being bounded
(Lemma \ref{liminfdnaninfinie}).

In Chapter \ref{thmequnicite}, we prove
the uniqueness of the $\mu$-stationary measure on $X_{V,W}$
(Proposition \ref{unicitedirac}).
Indeed, using the joining measure (Corollary \ref{corjoista}) of two distinct $\mu$-stationary probability 
measures on $X_{V,W}$, 
we construct (Lemma \ref{lemyvw}) a $\mu$-stationary measure 
$\overline{\nu}$ on the space $\Pj[W\oplus W']\smallsetminus (\Pj[W]\cup\Pj[W'])$.
This contradicts the classification of stationary  measures in \cite{Art}
since this space does not contain compact $G$-orbits
(Lemma \ref{pasdorbitescompactesdansX}). 
The weak convergence of the sequence of means of transition probabilities
follows easily (Corollary  \ref{corthmeqlunlpun}). 

In Appendix \ref{seclimlaw}, we collect known facts on random walks on reductive groups.\vspace{1em}

In this paper, all the vector spaces will be 
finite dimensional real vector spaces,
all the measures will be Borel measures and we will not distinguish between a real algebraic group
and its group of real points.

\section{Recurrence on affine Grassmannians}\label{passage}   
\bq
We explain first how to deduce Theorem \ref{thmeq}
from Theorem \ref{recgrassaff}
\eq

We use the notation of Theorem \ref{thmeq}.
The group $G$ is the affine group or the special affine group of $\R^d$, 
the space $\Xkd$ is the affine Grassmannian of $\R^d$,
the probability measure $\mu$ on $G$ is Zariski dense and compactly supported.

Let us construct $G$-vector spaces $W\subset V$ 
to which we will apply Theorem  \ref{thmeq}.  
We identify the affine space $\R^d$ with 
the affine hyperplane of $\R^{d+1}=\R^d\oplus\R$:
$$
\mathcal{A}=\{(w,\,1)\,|\,w\in\R^d\}.
$$
The group $G$ is then a subgroup of
$\mathrm{GL}(d+1,\,\R)$, which stabilizes $\mathcal{A}$,
and we have
$$
X_{k,d}=\Gr_{k+1}(d+1)\smallsetminus\Gr_{k+1}(d),
$$
where $\Gr_{k+1}(d\! +\! 1)$ and $\Gr_{k+1}(d)$ are  
the Grassmannians of $(k\! +\! 1)$-dimensional vector subspaces of 
$\R^{d+1}$ and of $\R^d$ respectively. 
Now, let 
$$
V:=\Lambda^{k+1}\R^{d+1}
\;\;{\rm and} \;\;
W:=\Lambda^{k+1}\R^{d}.
$$ 
The group $G$ acts linearly on the vector space $V$ 
and leaves invariant its vector subspace $W$.
The Pl\"{u}cker map
$$
\varphi\; :\;\Gr_{k+1}(d+1)\longrightarrow \Pj[V]
\;\; ;\;\;
U\longmapsto \Lambda^{k+1}U.
$$
is an embedding of the Grassmannian variety in the projective space of $V$.
It induces a $G$-equivariant injection
\[\varphi:\Xkd\hookrightarrow X_{V,W}:= \Pj[V]\smallsetminus\Pj[W].\]

\begin{prop}\label{constructionVWWp} With the above notations,\\
a) Hypotheses $(H1)$, $(H2)$, $(H3)$ hold for these $V$, $W$ and $W'=V/W$.\\
b) The $G$-equivariant inclusion 
$\Xkd\hookrightarrow X_{V,W}$ has closed image.\\
c) We have the equality $\lambda_{k+1}=\lambda_{1,\,W}-\lambda_{1,\,W'}$.
\end{prop}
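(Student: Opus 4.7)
The plan is to treat the three assertions in order, reducing most of the work to standard facts about exterior powers of the standard representation together with one direct computation involving translations, which is the main obstacle.

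For part (a), I would first identify $W'$ concretely. The splitting $\R^{d+1} = \R^d \oplus \R e_{d+1}$ yields a vector-space decomposition $V = W \oplus ((\Lambda^k \R^d) \wedge e_{d+1})$, and a short computation $(g,u)\cdot(v \wedge e_{d+1}) = gv \wedge u + gv \wedge e_{d+1}$ for $v \in \Lambda^k \R^d$ shows that the induced $G$-action on $W' = V/W$ identifies with the natural action on $\Lambda^k \R^d$, factoring through the projection $\pi : G \to \Gadlin$; the same is clearly true of $W = \Lambda^{k+1}\R^d$. Since $\pi_*\mu$ is Zariski dense in the linear part, (H1) reduces to the classical fact that each $\Lambda^p \R^d$ is an irreducible and proximal representation of $\Gadlin$ (or $\Gsadlin$): a diagonal element with distinct positive entries has a simple dominant eigenvalue on every $\Lambda^p$, and Zariski density supplies such elements inside the semigroup generated by $\Supp \mu$. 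For (H2), the two irreducibles $\Lambda^k$ and $\Lambda^{k+1}$ have distinct fundamental highest weights $\omega_k$ and $\omega_{k+1}$, hence are inequivalent.

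The main difficulty of (a) is (H3), which I prove by contradiction. Suppose $V = W \oplus W''$ as $G$-modules. Any $\omega \in W''$ is uniquely written as $\omega = w_0 + v_0 \wedge e_{d+1}$ with $w_0 \in W$ and $v_0 \in \Lambda^k \R^d$, and for a pure translation $(e,u) \in G$ the formula above gives $(e,u)\omega - \omega = v_0 \wedge u \in W$. Because $W''$ is $G$-invariant, this element lies in $W'' \cap W = \{0\}$, so $v_0 \wedge u = 0$ for every $u \in \R^d$. Since $k < d$, the wedge map $\Lambda^k \R^d \otimes \R^d \to \Lambda^{k+1}\R^d$ is nondegenerate in the first argument, forcing $v_0 = 0$, and therefore $\omega = w_0 \in W \cap W'' = \{0\}$. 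Hence $W'' = 0$, a contradiction. It is essential here to use the translations in $G$: the linear part alone would not detect the non-splitting, since $V$ and $W \oplus (\Lambda^k \R^d)$ agree as representations of the linear part.

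For part (b), the Pl\"ucker image of $\Gr_{k+1}(d+1)$ inside $\Pj[V]$ is closed since the Grassmannian is a projective variety, so it is enough to identify the image of $\varphi$ with the intersection of this closed set with $X_{V,W} = \Pj[V]\smallsetminus\Pj[W]$. The only point requiring verification is that for $U \in \Gr_{k+1}(d+1)$ one has $\Lambda^{k+1}U \in W$ if and only if $U \subset \R^d$: the reverse implication is immediate, while the direct one is obtained by choosing a basis $u_1,\ldots,u_{k+1}$ of $U$ adapted to $U \cap \R^d$ and observing that any basis vector with nonzero $e_{d+1}$-component produces a nonzero term in $(\Lambda^k \R^d) \wedge e_{d+1}$, preventing $\Lambda^{k+1}U$ from lying in $W$. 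For part (c), the identity is essentially the content of the definition of Lyapunov exponents: formula (\ref{eqndeflya}) gives $\lambda_1 + \cdots + \lambda_p = \lim_{\nti} \tfrac{1}{n}\int_G \log\|\Lambda^p g\|\,\dd\mu^{*n}(g)$, and the right-hand side is exactly the first Lyapunov exponent of $\mu$ acting on $\Lambda^p \R^d$. Applied to $p = k+1$ and $p = k$, this yields $\lambda_{1,W} = \lambda_1 + \cdots + \lambda_{k+1}$ and $\lambda_{1,W'} = \lambda_1 + \cdots + \lambda_k$, whose difference is $\lambda_{k+1}$.
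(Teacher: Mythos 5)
Your proposal is correct, and for everything except $(H3)$ it follows the paper's proof: $(H1)$ and $(H2)$ are the standard facts about the fundamental representations $\Lambda^{k+1}\R^d$ and $\Lambda^k\R^d$ of the linear part (the paper cites Bourbaki where you invoke highest weights), part (b) rests on the same identity $\varphi^{-1}(\Pj[W])=\Gr_{k+1}(d)$, and part (c) is the same two-line computation from the definition of the Lyapunov exponents. The one place where you genuinely diverge is $(H3)$. The paper first uses Schur's lemma together with $(H2)$ to conclude that the only linearly invariant complement of $W$ in $V\simeq W\oplus W'$ is the standard one, $\Lambda^k\R^d\wedge e_{d+1}$, and then observes that this single candidate is not preserved by translations. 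You instead take an arbitrary $G$-invariant complement $W''$, write $\omega=w_0+v_0\wedge e_{d+1}$, and use the identity $(e,u)\omega-\omega=v_0\wedge u\in W\cap W''=\{0\}$ together with the nondegeneracy of the wedge pairing $\Lambda^k\R^d\times\R^d\to\Lambda^{k+1}\R^d$ (valid precisely because $k<d$) to force $W''=0$. Your argument is self-contained and does not need $(H2)$ or Schur's lemma at all, at the cost of the small linear-algebra verification that $v_0\wedge u=0$ for all $u$ implies $v_0=0$; the paper's argument is shorter once the inequivalence of the two representations is in hand. Both correctly isolate the translations as the mechanism that obstructs the splitting, which you rightly flag as the essential point.
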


\begin{proof}[Proof of Proposition \ref{constructionVWWp}]
a) $(H1)$ : The representation of ${\rm SL}(d,\R)$ in $W=\Lambda^{k+1}\R^d$
is irreducible by \cite[Chap. 8.13.1.4]{BourGALie78}.
This representation is proximal since the image in ${\rm GL}(W)$ 
of a diagonal element of $G$ 
with positive distinct eigenvalues is a proximal element of 
${\rm GL}(W)$. 
The same is true for the representation in $W'\simeq\Lambda^k\R^d$.\\
$(H2)$ : The fact that the representations of ${\rm SL}(d,\R)$ in $W$ and $W'$ 
are not equivalent is also proven in \cite[Chap. 8.13.1.4]{BourGALie78}.\\
$(H3)$ : Since the representation of ${\rm SL}(d,\R)$ in $W$ and $W'$ are 
irreducible and are not equivalent,
by Schur's lemma, the only 
${\rm SL}(d,\R)$-invariant complementary subspace of $W$ in 
$V\simeq W\oplus W'$
is $W'$. But $W'$ is not invariant by the translations of $G$. 
 
b) The image $\varphi(X_{k,d})$ is closed in $X_{V,W}$ since
$\varphi^{-1}(\Pj[W])=\Gr_{k+1}(d)$.

c) This equality is the difference of the equalities
\[
\lambda_{1,\,W}=\lambda_1+\hdots+\lambda_{k+1}
\;\;\; {\rm and}\;\;\;
\lambda_{1,\,W'}=\lambda_1+\hdots+\lambda_{k}
\]
which follow from the very definition \eqref{eqndeflya} 
of the Lyapunov exponents.
\end{proof}

\begin{proof}[Proof of Theorem \ref{thmeq} $\Longrightarrow$ Theorem \ref{recgrassaff}]
We use  Proposition \ref{constructionVWWp}. 

If $\lambda_{k+1}\geq 0$, 
then we can apply Theorem \ref{thmeq}
in the case where  $\lambda_{1,\,W}\geq\lambda_{1,\,W'}$,
and there can be no $\mu$-stationary probability measure on $\Xkd$. 

Conversely, if $\lambda_{k+1}<0$, we are in the case where
 $\lambda_{1,\,W}<\lambda_{1,\,W'}$. 
Since $\Xkd$ is a $G$-invariant closed subset of $X_{V,W}$, 
we obtain uniform recurrence in law on $\Xkd$.
Lemma \ref{urloimesinv} then ensures
the existence of a $\mu$-stationary probability measure on $\Xkd$, 
which is thus the unique 
$\mu$-stationary probability measure on $X_{V,W}$. 
\end{proof}

\section{Uniform Recurrence When $\lambda_1<\lambda_1'$}\label{thmeqrecloi}
\bq
The goal of this Chapter is to show that the random walk on $X_{V,W}$
is uniformly recurrent in law
when $\lambda_1<\lambda_1'$ (Corollary \ref{l1lp1urloi}). 
\eq

\subsection{The Contraction Hypothesis}
\bq
We recall in this section the uniform contraction hypothesis
and why this condition implies the uniform recurrence in law.
\eq

The setting is very general (see \cite{MeyTwe}, \cite{EskMarg} or \cite{BenQuiFinVol}
for more details).
Let $X$ be a locally compact second-countable space
and $P$ a Markov-Feller operator on $X$.

\begin{df}\label{defUCH}
The operator $P$ satisfies the \emph{uniform contraction hypothesis} 
(UCH)
if there exists a 
proper map $u:X\rightarrow [0,\,\infty[$ 
and two constants $0<a<1$ and $b>0$ such that, over $X$,  
\begin{equation}
\label{eqnpulaub}
Pu\leq au+b.
\end{equation}
\end{df}

We recall that a map is \emph{proper} if the inverse image 
of every compact set is relatively compact.  
The definition of recurrence in law extends to 
Markov chains on $X$.
Uniform recurrence in law is fundamentally linked with (UCH):

\begin{prop}\label{UCHurloi} If $P$ satisfies (UCH), 
then the associated Markov chain on $X$ is uniformly recurrent in law. 
\end{prop}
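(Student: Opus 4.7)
The plan is to control $P^n u$ by iterating the UCH inequality, and then to apply Markov's inequality, using the sub-level sets of $u$ as a compact exhaustion of $X$. This is a standard Foster/Lyapunov-function argument; the real work of the chapter — which I would not duplicate here — is in constructing such a function $u$ on $X_{V,W}$, not in this implication.

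First I would iterate the inequality $Pu\le au+b$. Since $P$ is linear, positive, and preserves constants (being Markov), induction on $n$ starting from the UCH inequality gives
\[
P^n u \;\le\; a^n u + b\sum_{j=0}^{n-1} a^j \;\le\; a^n u + \frac{b}{1-a},
\]
pointwise on $X$, for every $n\ge 1$.

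Next, for each $R>0$ the sub-level set $C_R:=\{x\in X : u(x)\le R\}$ is relatively compact because $u$ is proper. Markov's inequality, applied to the probability $P^n\delta_x$ and to the nonnegative function $u$, then yields
\[
P^n\delta_x(X\setminus C_R) \;\le\; \frac{P^n u(x)}{R} \;\le\; \frac{a^n u(x)}{R}+\frac{b}{(1-a)R}.
\]

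To finish, given $\varepsilon>0$ I would first choose $R$ large enough that $b/((1-a)R)\le \varepsilon/2$; this fixes, once and for all, a single compact set $\overline{C_R}$, independently of the starting point $x$ — exactly the compact witness required by Definition \ref{defrectrans}. For each $x\in X$ the remaining term $a^n u(x)/R$ tends to $0$ as $n\to\infty$, so there exists $n_0=n_0(x)$ with $P^n\delta_x(\overline{C_R})\ge 1-\varepsilon$ for all $n\ge n_0$, which is the uniform recurrence in law. The only mild subtlety is that $u$ may be unbounded on $X$, so the time threshold $n_0$ genuinely depends on $u(x)$; this is however consistent with the paper's definition, in which only the compact set, not the time, is required to be chosen uniformly in $x$. (One gets, as a bonus, uniformity of $n_0$ on any set where $u$ is bounded.)
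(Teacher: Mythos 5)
Your proof is correct and is essentially the paper's own argument: the authors also iterate $Pu\le au+b$ to get $P^nu\le a^nu+\frac{b}{1-a}$, take the sub-level set $\{u\le \tfrac{2b}{(1-a)\varepsilon}\}$ as the uniform compact witness, and conclude by Markov's inequality, with the time threshold depending on $u(x)$ exactly as you note.
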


\begin{proof}
See \cite[Thm 15.0.1]{MeyTwe}, \cite[Lem 3.1]{EskMarg} or \cite[Lem 2.1]{BenQuiFinVol}.
\end{proof}

\begin{lem}
\label{urloimesinv} 
If $P$ is recurrent in law at point $x\in X$, 
there exists a $P$-invariant probability measure on $X$.  
\end{lem}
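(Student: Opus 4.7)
The plan is to apply a standard Krylov--Bogolyubov argument to the Cesàro averages of the transition probabilities starting from $x$. Define, for each $n\geq 1$, the probability measure
\[
\nu_n := \frac{1}{n}\sum_{j=1}^n P^j\delta_x.
\]
I will first show that the sequence $(\nu_n)\sui$ is tight, then extract a weak limit via Prokhorov, and finally check that any such limit is $P$-invariant.

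For tightness, fix $\varepsilon>0$ and apply the recurrence-in-law hypothesis at $x$: there exist a compact set $C\subset X$ and $n_0\in\N^*$ such that $P^j\delta_x(C)\geq 1-\varepsilon$ for every $j\geq n_0$. Bounding the first $n_0-1$ terms trivially and the remaining ones by $1-\varepsilon$, one gets
\[
\nu_n(C)\geq \frac{1}{n}\sum_{j=n_0}^{n}P^j\delta_x(C)\geq \frac{n-n_0+1}{n}(1-\varepsilon),
\]
which is at least $1-2\varepsilon$ for all $n$ large enough. Enlarging $C$ to absorb the first $n_0$ terms, we obtain a single compact $C_\varepsilon$ with $\nu_n(C_\varepsilon)\geq 1-2\varepsilon$ for every $n\geq 1$. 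Hence $(\nu_n)\sui$ is tight. By Prokhorov's theorem, some subsequence $\nu_{n_k}$ converges weakly to a Borel probability measure $\nu$ on $X$; tightness guarantees no mass escapes to infinity, so $\nu$ really is a probability.

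It remains to verify that $\nu$ is $P$-invariant. A direct computation gives
\[
P\nu_n-\nu_n=\frac{1}{n}\bigl(P^{n+1}\delta_x-P\delta_x\bigr),
\]
whose total variation norm is at most $2/n$ and therefore tends to $0$. Now let $f\in\contb{X}$ (continuous with compact support, or more generally bounded continuous vanishing at infinity). Since $P$ is Markov--Feller, $Pf$ is also bounded and continuous, so
\[
\int f\,\dd(P\nu_{n_k})=\int Pf\,\dd\nu_{n_k}\xrightarrow[k\to\infty]{}\int Pf\,\dd\nu=\int f\,\dd(P\nu).
\]
Combined with $\nu_{n_k}\to\nu$ weakly and $\|P\nu_{n_k}-\nu_{n_k}\|\to 0$, this yields $\int f\,\dd\nu=\int f\,\dd(P\nu)$ for every such $f$, hence $P\nu=\nu$.

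The only subtle point is that weak convergence of $\nu_{n_k}$ must pass through the operator $P$; this is exactly what the Feller property ensures, so no additional obstacle arises. Everything else is routine tightness bookkeeping, and the same pattern will be invoked later whenever (UCH) is available via Proposition \ref{UCHurloi}.
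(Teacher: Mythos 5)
Your proof is correct and follows essentially the same route as the paper: Cesàro averages of the transition probabilities, extraction of a weak limit by compactness, invariance of the limit, and recurrence in law to rule out escape of mass. The only cosmetic difference is that you invoke tightness and Prokhorov where the paper uses Banach--Alaoglu on sub-probability measures, which amounts to the same argument.
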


\begin{proof} 
By the Banach-Alaoglu Theorem, 
the sequence of means of transition probabilities $\nu_n=\frac{1}{n}\sum_{j=1}^n P^j_x$ 
has at least one accumulation point $\nu_\infty$
for the weak-$*$ topology.
This finite measure $\nu_{\infty}$ is $P$-invariant.
Since $P$ is recurrent in law at $x$, there is no escape of mass 
and $\nu_{\infty}$ is a probability measure. 
\end{proof}

The following lemma is a useful tool to check (UCH).

\begin{lem} 
\label{lempnouch}
Let $n\geq 1$. If $P^n$ satisfies $(UCH)$ 
then $P$ satisfies $(UCH)$ too.
\end{lem}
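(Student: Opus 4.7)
The plan is to manufacture from the proper function $u$ witnessing (UCH) for $P^n$ a new proper function $v$ witnessing (UCH) for $P$ itself. The natural candidate is a geometrically weighted partial sum of iterates of $u$ under $P$: for a parameter $\beta > 1$ to be fixed, I would set
$$v := \sum_{k=0}^{n-1} \beta^{k}\, P^{k} u.$$
Since $P$ is Markov-Feller and $u \geq 0$, each $P^k u$ is a nonnegative measurable function, and the $k=0$ term is $u$ itself, so $v \geq u$ pointwise. This immediately gives properness of $v$: the preimage $v^{-1}([0,M])$ is contained in $u^{-1}([0,M])$, which is relatively compact by hypothesis on $u$.

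Next I would compute $Pv$ through a one-step index shift and recognize a telescoping structure:
$$\beta\, Pv \;=\; \sum_{k=1}^{n} \beta^{k}\, P^{k} u \;=\; (v - u) + \beta^{n}\, P^{n} u.$$
Substituting the hypothesis $P^{n} u \leq a u + b$ yields
$$\beta\, Pv \;\leq\; v + (a\beta^{n} - 1)\, u + \beta^{n} b.$$
The point of the construction is now visible: choosing $\beta := a^{-1/n} > 1$ makes the coefficient $a\beta^{n} - 1$ vanish, so the $u$-term disappears and one obtains
$$Pv \;\leq\; a^{1/n}\, v + a^{(1-n)/n}\, b.$$
Since $0 < a < 1$, the factor $a^{1/n}$ still lies in $(0,1)$, so this is precisely the (UCH) inequality for $P$, with contraction constant $a' = a^{1/n}$ and additive constant $b' = a^{(1-n)/n} b$.

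The only conceptual difficulty is the choice of the geometric weight $\beta^{k}$. A flat sum $v = \sum_{k=0}^{n-1} P^{k} u$ would only produce $Pv \leq v + b$, which is not a contraction. The exponential weighting with base $\beta = a^{-1/n}$ is in fact the unique one that makes the boundary contribution $\beta^{n} P^{n} u$ absorb, via the $P^n$-contraction hypothesis, the $-u$ coming out of the telescope. Everything else in the argument is a routine verification, and no further ingredient from the paper is needed beyond the definition of (UCH).
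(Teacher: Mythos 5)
Your proof is correct and is essentially the paper's own argument: the authors use exactly the auxiliary function $u'=\sum_{k=0}^{n-1}a^{-k/n}P^{k}u$, i.e.\ your $v$ with $\beta=a^{-1/n}$, and obtain the same constants $a'=a^{1/n}$ and $b'=a^{(1-n)/n}b$. Your telescoping computation and the properness argument via $v\geq u$ match the intended proof, so there is nothing to add.
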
 

\begin{proof}
Let $u$ be the proper map and $a$,$b$ the constants such that
$P^nu\leq au+b$ over $X$.
Let $\alpha_k=a^{-k/n}$ for $0\leq k\leq n-1$, 
$a'=a^{1/n}$,
$b'=\frac{a'}{a} b$. 
Then the proper map $u':X\rightarrow \R_+$ 
defined by $u'=\sum_{k=0}^{n-1} \alpha_k P^k u$
satisfies the inequality
$Pu'\leq a'u'+b'$
on $X$,
and thus $P$ satisfies(UCH). 
\end{proof}

\subsection{Finding a Contracted Function}
\bq
In this section, we use again 
the notations and assumptions of Theorem \ref{thmeq}. 
We will prove that the averaging operator
satisfies the uniform contraction hypothesis. 
\eq

We recall that $W\subset V$ are real vector spaces,
$G$ is a Zariski connected algebraic subgroup of $\Gl$
preserving $W$ and satisfying $(H1)$, $(H2)$, $(H3)$.
We identify the quotient $W'=V/W$ with a complementary subspace $W_s$ of $W$
in $V$.
Note that this subspace $W_s$ is not $G$-invariant.
We recall also that $\mu$ is a Zariski dense probability measure on $G$
with compact support and that $\lun$ and $\lpun$ 
are the first Lyapunov exponents of $\mu$ in $W$ and $W'$,
and that we are studying the associated random walk on
the $G$-space $X_{V,W}:=\Pj[V]\smallsetminus\Pj[W]$.

The corresponding Markov operator 
$P_{\mu}:\cont[X_{V,W}]\longrightarrow\cont[X_{V,W}]$ is given by 
$$
\textstyle
P_{\mu}f (x)=\int_G f(gx) \dd\mu(g).
$$

\begin{prop}\label{PmuUCH} Same notations and assumptions
as in Theorem \ref{thmeq}.\\
If $\lun<\lpun$, then the Markov operator $P_{\mu}$ satisfies (UCH). 
\end{prop}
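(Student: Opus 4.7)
The plan is to exhibit a proper function $u_\gamma$ on $X_{V,W}$ of the form $\eta^\gamma$ for some small $\gamma>0$, show that an iterate $P_\mu^n$ contracts it in the sense of (UCH), and invoke Lemma \ref{lempnouch} to pass back to $P_\mu$. Identify $W'=V/W$ with a fixed (non $G$-invariant) complement $W_s$ of $W$ in $V$, so that each $v\in V$ writes uniquely as $v=v_W+v_s$ with $v_W\in W$ and $v_s\in W_s$. For $v\notin W$, set
\[
\eta(v):=\frac{\|v\|}{\|v_s\|}.
\]
This is $\R^*$-invariant, descends to a function on $X_{V,W}$, and is proper there: $\eta(x)\to\infty$ precisely when $v_s\to 0$, i.e.\ when $x$ approaches $\mathbb{P}(W)$. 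I take $u_\gamma:=\eta^\gamma$ for a parameter $\gamma\in(0,1]$ to be chosen later.

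Writing $g\in G$ in block form relative to $V=W\oplus W_s$,
\[
g=\begin{pmatrix} g|_W & c(g)\\ 0 & g_{W'}\end{pmatrix},
\]
where $g_{W'}$ is the induced action on $V/W$ and $c(g):W_s\to W$ is the cocycle term, the identity $(gv)_s=g_{W'}v_s$ gives
\[
\eta(gv)\leq\frac{\|g|_W\|}{\|g_{W'}\hat v_s\|}\,\eta(v)+\frac{\|c(g)\|}{\|g_{W'}\hat v_s\|}+1,
\]
with $\hat v_s:=v_s/\|v_s\|$. Using the sub-additivity $(a+b+c)^\gamma\leq a^\gamma+b^\gamma+c^\gamma$ for $\gamma\in(0,1]$ and integrating against $\mu^{*n}$,
\[
P_\mu^n u_\gamma(x)\leq A_n(\gamma)\,u_\gamma(x)+B_n(\gamma)+1,
\]
where $A_n(\gamma):=\sup_{v'\in W'\setminus\{0\}}\int\bigl(\|g|_W\|\,\|v'\|/\|g_{W'}v'\|\bigr)^{\gamma}\,d\mu^{*n}(g)$ and $B_n(\gamma)$ is analogous with $\|c(g)\|$ in place of $\|g|_W\|$.

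The crucial step is showing $A_n(\gamma)<1$ for some small $\gamma>0$ and large $n$. Here I would invoke the Guivarc'h--Le~Page spectral theorem (Appendix \ref{seclimlaw}) applied separately to the irreducible proximal representations on $W$ and on $W'$: for $\gamma>0$ small, $\int\|g|_W\|^{2\gamma}\,d\mu^{*n}=e^{n(2\gamma\lun+O(\gamma^2))}$ and, uniformly in $v'\in W'\setminus\{0\}$, $\int\|g_{W'}v'\|^{-2\gamma}\,d\mu^{*n}\leq C\|v'\|^{-2\gamma}e^{-n(2\gamma\lpun+O(\gamma^2))}$, the uniformity in $v'$ being provided by the positive continuous Le~Page eigenfunction on $\mathbb{P}(W')$. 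Cauchy--Schwarz then gives $A_n(\gamma)\leq C e^{n\gamma(\lun-\lpun)+O(n\gamma^2)}$, which is arbitrarily small for $\gamma$ small and $n$ large, since $\lun<\lpun$ by hypothesis.

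The finiteness of $B_n(\gamma)$ is immediate from the compact support of $\mu$: on $\mathrm{supp}\,\mu^{*n}$ the cocycle $\|c(g)\|$ is uniformly bounded above and $\|g_{W'}\hat v'\|$ is uniformly bounded below in $\hat v'$. Combining these estimates gives $P_\mu^n u_\gamma\leq a\,u_\gamma+b$ with some $a<1$, so $P_\mu^n$ satisfies (UCH), and Lemma \ref{lempnouch} then propagates (UCH) to $P_\mu$. The main obstacle is the uniform control of $A_n(\gamma)$ in the direction $\hat v_s$: replacing $\|g_{W'}\hat v_s\|$ by the cruder conorm $m(g_{W'})=\|g_{W'}^{-1}\|^{-1}$ would introduce the smallest Lyapunov exponent of $W'$ instead of $\lpun$, and the contraction would fail. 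It is precisely the Le~Page regularization on $\mathbb{P}(W')$, whose availability rests on both the irreducibility and the proximality of $W'$ from hypothesis (H1), that produces the correct rate $\lpun$.
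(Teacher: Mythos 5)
Your proof is correct, and its skeleton matches the paper's: the same proper function (your $\eta=\|v\|/\|v_s\|$ differs from the paper's $\|v_W\|/\|v_s\|$ only by a bounded additive term), the same block-triangular decomposition of $g$, and the same reduction from $P_\mu^n$ to $P_\mu$ via Lemma \ref{lempnouch}. Where you genuinely diverge is in how the contraction factor is shown to be $<1$. The paper stays with the \emph{logarithm} of the ratio: it writes $u_1(gx)/u_1(x)^{\delta}=e^{\delta\log(\cdot)}$, Taylor-expands $e^{\delta t}\le 1+\delta t+\delta^2M$ on the complement of an explicit compact set $K$ (where the ratio is uniformly bounded, so the error constant $M$ exists), and then only needs Furstenberg's lemma on the uniform convergence of $\frac1n\int\log\frac{\|a_gw\|}{\|w\|}\,d\mu^{*n}$ to the top Lyapunov exponent --- a statement requiring only irreducibility and first moments. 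You instead estimate the $\gamma$-moments directly, via Cauchy--Schwarz and the Le~Page transfer-operator spectral gap on $\mathbb{P}(W')$ to get $\sup_{v'}\int(\|v'\|/\|g_{W'}v'\|)^{2\gamma}d\mu^{*n}\le Ce^{-2n\gamma\lpun+O(n\gamma^2)}$ uniformly in the direction. This is heavier machinery (it uses proximality of $W'$ and exponential moments, both available here since $(H1)$ holds and $\mu$ is compactly supported), but it buys you a cleaner conclusion: your inequality $P^n_\mu u_\gamma\le A_n(\gamma)u_\gamma+B_n(\gamma)+1$ holds on all of $X_{V,W}$ with no exceptional compact set, because the cocycle term is absorbed additively through the subadditivity of $t\mapsto t^\gamma$ rather than through the bound $\log(1+t)\le t$ on a region where $t$ is small. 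Two minor points: the spectral-gap and large-deviation facts you invoke are not in Appendix \ref{seclimlaw} (which only records the LLN and LIL for the Cartan projection), so you would need to cite them from the literature; and the order of quantifiers ($\gamma$ first, then $n$) should be made explicit, since the finite-$n$ moment bounds only approach their exponential rates for $n$ large depending on $\gamma$. Your closing remark correctly identifies the crux --- uniformity over directions in $W'$, which the paper secures with Furstenberg's lemma and you with the positive Le~Page eigenfunction.
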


\begin{proof} 
The space $X_{V,W}$
can be seen as the set 
$$
X_{V,W}=\{[w,\, w']\,|\, w\in W,\, w'\in W_s\smallsetminus\{0\}\}.
$$ 
Choose a norm on $V$, and, for $\delta>0$, consider the functions
\[
u_{\delta}\; :\;
X_{V,W}\longrightarrow\R_+
\;\; ;\;\; [w,\,w']\longmapsto \tfrac{\n[w]^{\delta}}{\n[w']^{\delta}}.\]
These functions are proper and well-defined. 
We want to find
$\delta>0$, $ a\in ]0,\,1[$, $b>0$, $n_0\in\N^*$ such that, 
over $X_{V,W}$, one has the inequality
\begin{equation}
\label{ineqUCH}
P_{\mu}^{n_0}u_{\delta}\leq a u_{\delta} +b\, .
\end{equation}

Since $W$ is $G$-invariant, we can write $g\in G$ as
\begin{equation}
\label{eqnagcgdg}
g=\begin{pmatrix}
a_g & c_g \\
0 & d_g
\end{pmatrix}\;
\mbox{\rm with $a_g\in \mathrm{GL}(W),\,d_g\in \mathrm{GL}(W_s), 
\, c_g\in \mathcal{L}(W_s,\,W)$.}
\end{equation}
Let $0<\varepsilon < \frac{\lpun-\lun}{8}$.
Then, by a lemma due to Furstenberg (cf. \cite[Thm 4.28]{Livre}, \cite{Furst63})
since $G$ 
acts irreducibly on $W$ and $W'$ 
there exists $n_0\in\N^*$ such that 
for all $n\geq n_0$,
for all non-zero $w\in W,\, w'\in W_s$,
the following inequalities hold:
\begin{align}\label{inegFurstlun}
\lun-\varepsilon &\leq \frac{1}{n}\int_G \log \frac{\n[a_g w]}{\n[w]}\dd \mu^{*n}(g)
\leq \lun+\varepsilon, \\ \label{inegFurstlpun}
\lpun-\varepsilon &\leq \frac{1}{n}\int_G \log \frac{\n[d_g w']}{\n[w']}\dd \mu^{*n}(g)
\leq \lpun+\varepsilon.
\end{align}

For $\delta >0$ and $x=[w,\,w']\in X_{V,W}$, one computes
\[P^{n_0}_{\mu} u_{\delta}(x)=u_{1}(x)^{\delta}  \;
\int_G\frac{u_1(gx)^{\delta}}{u_1(x)^{\delta}}
\dd\mu^{*n_0}(g).\]
We will give an upper bound for the right-hand integral
for all $x$ in the complementary set
of some compact $K$ in $X$ ; 
since map $P_{\mu}^{n_0}u_{\delta}$ is bounded on the compact set $K$, 
this will give inequality (\ref{ineqUCH}). 
Let $c>0$ be the constant defined by
\[c^{-1}= \frac{4}{n_0(\lpun-\lun)}\int_G\n[c_g]\,\n[a_g^{-1}] \dd \mu^{*n_0}(g).\]  
Let $K$ be the compact subset of $X_{V,W}$ given by
\[K=\{[w,\,w']\,|\, w\in W, \; w'\in W_s,\; \n[w']\geq c\, \n[w] \}.\] 
For $\mu^{*n_0}$-almost every $g\in G$,
for all $x\in X\smallsetminus K$,
the following ratio 
is bounded:
\[
\frac{u_1(gx)}{u_1(x)} 
\; =\; 
\frac{\n[a_gw+c_gw']}{\n[w]}\, \frac{\n[w']}{\n[d_gw']}
\;\leq\; 
\sup_{g\in \Supp \mu^{*n_0}}
\n[d_g^{-1}](\n[a_g]+c\n[c_g]).
\]
Therefore, we can find some constant $M_{n_0}>0$ 
such that for all $\delta>0$, 
for all $x\in X\smallsetminus K$, 
for $\mu^{*n_0}$-almost every $g\in G$,
we can write
\[ \frac{u_{1}(gx)^{\delta}}{u_{1}(x)^{\delta}}=e^{\delta\log \frac{u_1(gx)}{u_1(x)}}
\leq 1+\delta \log  \frac{u_1(gx)}{u_1(x)} +\delta^2 M_{n_0}.\]
For all $x\in X\smallsetminus K$, for $\mu^{*n_0}$-almost every $g\in G$, 
the following upper bound holds: 
\begin{align*}
\log \frac{u_1(gx)}{u_1(x)} &= \log\frac{\n[a_g w]}{\n[w]} 
- \log\frac{\n[d_g w']}{\n[w']}+ \log \frac{\n[a_gw+c_gw']}{\n[a_gw]}\\
&\leq \log\frac{\n[a_g w]}{\n[w]} 
- \log\frac{\n[d_g w']}{\n[w']}+\frac{\n[c_gw']}{\n[a_gw]} \\
&\leq \log\frac{\n[a_g w]}{\n[w]} 
- \log\frac{\n[d_g w']}{\n[w']}+ \n[c_g]\n[a_g^{-1}]c . 
\end{align*}
Using inequalities (\ref{inegFurstlun}), (\ref{inegFurstlpun})
and the definition of $c$, we get the inequality
\begin{equation*}
\int_G \log \frac{u_1(gx)}{u_1(x)} \dd \mu^{*n_0}(g)
\;\leq\; 
n_0(\lun-\lpun+2\varepsilon) + \frac{n_0(\lpun-\lun)}{4} 
\;\leq\; 
\frac{n_0(\lun-\lpun)}{2}. 
\end{equation*}
Let $\kappa=\frac{n_0(\lpun-\lun)}{2} >0$. We then get the upper bound,
for all $x\in X\smallsetminus K$,
\[\int_G \frac{u_{1}(gx)^{\delta}}{u_{1}(x)^{\delta}} \dd \mu^{*n_0}(g)\leq 1-\delta \kappa 
+\delta^2 M_{n_0}.\]

Choose $\delta >0$ such that 
$a_{n_0,\,\delta}:= 1-\delta \kappa +\delta^2 M_{n_0}$ 
is strictly between $0$ and $1$.
Therefore, since $K$ is compact, there exists a constant $b_{n_0,\,\delta}$ such that 
for all $x\in X$:
\[P^{n_0}_{\mu}u_{\delta} (x) \leq a_{n_0,\,\delta}\, u_{\delta}(x) +b_{n_0,\, \delta},\]
and, by Lemma \ref{lempnouch}, the operator $P_{\mu}$ satisfies (UCH). 
\end{proof}

\begin{cor}\label{l1lp1urloi} Same notations and assumptions 
as in Theorem \ref{thmeq}.\\
If $\lun<\lpun$, 
then the \ma on $X$ is uniformly recurrent in law.
\end{cor}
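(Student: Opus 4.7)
The plan is to chain directly the two general results already established in this chapter. Proposition \ref{PmuUCH} asserts that, under the hypothesis $\lun<\lpun$, the Markov operator $P_\mu$ attached to the random walk on $X_{V,W}$ satisfies the uniform contraction hypothesis (UCH). Proposition \ref{UCHurloi} in turn converts (UCH) into uniform recurrence in law of the associated Markov chain. Composing the two statements yields the corollary.

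Concretely, the argument reduces to a single sentence: by Proposition \ref{PmuUCH} the operator $P_\mu$ satisfies (UCH), hence by Proposition \ref{UCHurloi} the random walk on $X_{V,W}$ is uniformly recurrent in law. No additional ingredients are needed, since $X_{V,W}$ is locally compact and second countable and $P_\mu$ is Markov-Feller; both properties are built into the setup of Theorem \ref{thmeq} and into the assumption that $\mu$ is compactly supported.

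The real obstacle is not in the corollary itself but in Proposition \ref{PmuUCH}, already carried out. There, the crux is exhibiting a proper function on $X_{V,W}$ which some iterate $P_\mu^{n_0}$ contracts; once the family $u_\delta([w,w'])=\|w\|^\delta/\|w'\|^\delta$ is introduced and the Furstenberg-type averages \eqref{inegFurstlun}--\eqref{inegFurstlpun} are combined with the linearization $e^{\delta t}\leq 1+\delta t+\delta^2 M$, the inequality $P_\mu^{n_0}u_\delta\leq a\, u_\delta+b$ drops out for sufficiently small $\delta>0$, and Lemma \ref{lempnouch} transfers the contraction from $P_\mu^{n_0}$ to $P_\mu$ itself. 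Everything else, including the present corollary, is bookkeeping.
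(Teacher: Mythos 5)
Your argument is exactly the paper's: the corollary follows by combining Proposition \ref{PmuUCH} (which gives (UCH) for $P_\mu$ when $\lun<\lpun$) with Proposition \ref{UCHurloi} (which converts (UCH) into uniform recurrence in law). Correct and identical in approach.
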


\begin{proof}
This is a direct consequence of Proposition \ref{PmuUCH}: 
since $P_{\mu}$ satisfies (UCH), 
we only need to apply Proposition \ref{UCHurloi}. 
\end{proof}

\section{ Non-Recurrence in Law When $\lambda_1 \geq  \lambda_1'$} 
\label{l1lp1nrecloi}  

\bq
The goal of this Chapter is to show that the random walk on $X_{V,W}$
is nowhere recurrent in law
when $\lambda_1\geq\lambda_1'$ (Proposition \ref{l1lp1nonrecloiprop}). 
\eq

\subsection{The Limit Measures}
\bq
We recall in this section the definition and the properties of the
limit probability measures associated to a stationary measure.
\eq

The setting is very general.
Let $G$ be a  locally compact group acting on a 
second countable locally compact space $X$ and 
$\mu$ be a probability measure on $G$.
Let $B$ be the product space $B=G^{\N^*}$ and
$\beta$ be the product measure $\beta=\mu^{\otimes\N^*}$. 
The following lemma is due to Furstenberg. See \cite[Lem 3.2]{StatI} 
or \cite[Lemma 2.17]{Livre}.

\begin{lem}
\label{nub}
Let $\nu$ be a $\mu$-stationary probability measure on $X$. 
For $\beta$-almost every $b\in B$, the sequence
$(b_1\cdots b_n)_*\nu$
of probability measures on $X$ has a limit $\nu_b$, 
which we will call \emph{limit probability}.  
Moreover, we have
$\nu=\int_{B}\nu_b\dd\beta(b).$
\end{lem}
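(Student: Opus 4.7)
The plan is to apply a martingale convergence argument on test functions and then upgrade pointwise convergence to weak convergence of probability measures.

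For each continuous compactly-supported $f:X\to\R$, define the random variable $\phi_n^f:B\to\R$ by
\[\phi_n^f(b)=\int_X f(b_1\cdots b_n\cdot x)\,\dd\nu(x)=\int f\,\dd\bigl((b_1\cdots b_n)_*\nu\bigr),\quad b=(b_i)\in B.\]
The first step is to check that $(\phi_n^f)_{n\geq 1}$ is a bounded martingale with respect to the natural filtration $\mathcal{F}_n=\sigma(b_1,\ldots,b_n)$. Indeed, by independence of $b_{n+1}$ from $\mathcal{F}_n$, Fubini, and the stationarity identity $\mu*\nu=\nu$, one computes
\[\esp\bigl[\phi_{n+1}^f\mid\mathcal{F}_n\bigr](b)=\int_G\int_X f(b_1\cdots b_n\cdot gx)\,\dd\nu(x)\,\dd\mu(g)=\int_X f(b_1\cdots b_n\cdot y)\,\dd(\mu*\nu)(y)=\phi_n^f(b).\]
Since $|\phi_n^f|\leq\sup_X|f|$, Doob's martingale convergence theorem provides a $\beta$-a.s.\ limit $L^f(b)$.

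Next, I would choose a countable dense sequence $(f_k)$ in the separable Banach space $\mathcal{C}^0_0(X)$ of continuous functions vanishing at infinity (separability follows from $X$ being second countable locally compact) and intersect the associated full-measure convergence sets to obtain a $\beta$-conull set $B_0\subset B$ on which $\phi_n^{f_k}(b)\to L^{f_k}(b)$ for every $k$. Using the uniform estimate $\|\phi_n^f\|_\infty\leq\|f\|_\infty$ together with linearity in $f$, this extends to convergence of $\phi_n^f(b)$ for every $f\in\mathcal{C}^0_0(X)$ and every $b\in B_0$, and $f\mapsto L^f(b)$ is a positive linear functional of norm at most $1$. The Riesz representation theorem then delivers a subprobability measure $\nu_b$ on $X$ with $L^f(b)=\int f\,\dd\nu_b$, so that $(b_1\cdots b_n)_*\nu\to\nu_b$ in the vague topology for every $b\in B_0$.

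To obtain the decomposition $\nu=\int_B\nu_b\,\dd\beta(b)$, I would iterate stationarity as $\nu=\mu^{*n}*\nu$, which yields the Fubini identity $\int_B\phi_n^f(b)\,\dd\beta(b)=\int f\,\dd\nu$ for every $n$ and every $f\in\mathcal{C}^0_0(X)$. Bounded convergence then gives $\int_B L^f(b)\,\dd\beta(b)=\int f\,\dd\nu$, i.e.\ $\nu=\int_B\nu_b\,\dd\beta(b)$ as Borel measures on $X$. The main obstacle, which I would address last, is the no-escape-of-mass step needed to promote $\nu_b$ to a probability and vague convergence to weak convergence: taking an increasing sequence $0\leq g_k\uparrow 1$ in $\mathcal{C}^0_0(X)$, monotone convergence in the decomposition gives $\int_B\nu_b(X)\,\dd\beta(b)=\nu(X)=1$, and since $\nu_b(X)\leq 1$ everywhere, this forces $\nu_b(X)=1$ for $\beta$-a.e.\ $b$; a standard tightness argument then upgrades vague to weak convergence against bounded continuous test functions, completing the proof.
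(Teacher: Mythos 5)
Your proof is correct and is essentially the argument of the references the paper cites for this lemma (the paper itself only points to \cite[Lem 3.2]{StatI} and \cite[Lemma 2.17]{Livre}): the bounded martingale $\phi_n^f$, Doob convergence over a countable dense family of test functions, Riesz representation, the identity $\nu=\mu^{*n}*\nu$ to get the barycenter formula, and the mass argument $\int_B\nu_b(X)\,\dd\beta(b)=1$ to rule out escape of mass. Nothing to add.
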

The following construction will be useful 
in Chapter \ref{secunista}. See \cite[Cor 3.5]{StatI} for a proof.

\begin{cor} 
\label{corjoista}
Let $\nu_1$ and $\nu_2$ be two 
$\mu$-stationary probability measures on $X$. Then
the probability measure on $X\times X$
\begin{equation}
\label{eqnjoista}
\nu_1\boxtimes\nu_2 := \int_B \nu_{1,\,b}\otimes\nu_{2,\,b}\dd \beta(b)
\end{equation}
is $\mu$-stationary. It is called the \emph{joining measure} of $\nu_1$
and $\nu_2$.
\end{cor}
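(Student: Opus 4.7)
The plan is to verify the stationarity equation $\mu * (\nu_1 \boxtimes \nu_2) = \nu_1 \boxtimes \nu_2$ by unwinding the definition \eqref{eqnjoista} and reassembling it via a change of variables on the Bernoulli shift space $B$.

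First I would record the cocycle relation hidden in Lemma \ref{nub}. Denote the left shift on $B$ by $T(b_1,b_2,\ldots) = (b_2,b_3,\ldots)$. For $\beta$-almost every $b\in B$ and for $i=1,2$, one has
\[
\nu_{i,b} \;=\; \lim_n (b_1\cdots b_n)_* \nu_i \;=\; (b_1)_* \lim_n (b_2\cdots b_n)_* \nu_i \;=\; (b_1)_* \nu_{i,Tb}.
\]
Equivalently, writing $gb := (g,b_1,b_2,\ldots)$ for the concatenation of $g\in G$ with $b\in B$, one has $\nu_{i,gb} = g_* \nu_{i,b}$ for $\mu\otimes\beta$-almost every $(g,b)$. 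Moreover, by the very definition $\beta=\mu^{\otimes \N^*}$, the push-forward of $\mu\otimes\beta$ under the concatenation map $(g,b)\mapsto gb$ is $\beta$ itself.

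With these two observations in hand, a direct computation using Fubini gives
\begin{align*}
\mu * (\nu_1\boxtimes\nu_2)
&= \int_G g_*(\nu_1\boxtimes\nu_2)\, \dd\mu(g) \\
&= \int_G \int_B (g_*\nu_{1,b})\otimes(g_*\nu_{2,b}) \, \dd\beta(b)\, \dd\mu(g) \\
&= \int_G \int_B \nu_{1,gb}\otimes \nu_{2,gb} \, \dd\beta(b)\, \dd\mu(g) \\
&= \int_B \nu_{1,b'}\otimes \nu_{2,b'}\, \dd\beta(b') \;=\; \nu_1\boxtimes\nu_2,
\end{align*}
where $g_*$ denotes the push-forward under the diagonal action of $g$ on $X\times X$, and the penultimate equality is the change of variables along the concatenation map $(g,b)\mapsto gb$.

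The routine technicalities are the measurability of $b\mapsto \nu_{i,b}$ and the applicability of Fubini, both of which are already ensured by Lemma \ref{nub}; the second-countability of $X$ is what allows one to view $\nu_{1,b}\otimes\nu_{2,b}$ as a measurable family of probability measures on $X\times X$. The only genuine content is the cocycle identity $\nu_{i,gb}=g_*\nu_{i,b}$, which follows immediately from the existence of the limit defining $\nu_{i,b}$. The main obstacle, such as it is, is simply to keep track of the bookkeeping between $B$ and $G\times B$ so that the change of variables is unambiguous; once that is clear, there is nothing left to prove.
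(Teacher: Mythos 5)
Your proof is correct. The paper does not actually prove Corollary \ref{corjoista} itself but defers to \cite[Cor 3.5]{StatI}, and your argument --- the cocycle identity $\nu_{i,gb}=g_*\nu_{i,b}$ obtained from the existence of the limits in Lemma \ref{nub}, combined with the observation that the concatenation map $(g,b)\mapsto gb$ pushes $\mu\otimes\beta$ forward to $\beta$ --- is precisely the standard argument given in that reference.
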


This corollary will be used in combination with the following basic lemma. 

\begin{lem}\label{diagonalesansatome} 
Let $m_1$, $m_2$  be probability measures on a topological space $X$ 
and let $\Delta_X:=\{ (x,x)\mid x\in X\}$ be the diagonal of $X$.

If $m_1\otimes m_2 (\Delta_X) = 1$, 
then $m_1$ and $m_2$ are identical Dirac measures. 
\end{lem}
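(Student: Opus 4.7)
The plan is to apply Fubini's theorem to the product measure $m_1\otimes m_2$ evaluated on the diagonal. Since the $x$-slice of $\Delta_X$ is the singleton $\{x\}$, Fubini yields
\[
1=(m_1\otimes m_2)(\Delta_X)=\int_X m_2(\{x\})\,\dd{m_1}(x).
\]
Because $m_2(\{x\})\leq 1$ pointwise, this integral can equal $1$ only if $m_2(\{x\})=1$ for $m_1$-almost every $x$. By symmetry one also gets $m_1(\{y\})=1$ for $m_2$-almost every $y$.

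From there I would observe that there is at most one point $x_0\in X$ with $m_2(\{x_0\})=1$, since the existence of two distinct such points would force $m_2(X)\geq 2$. Hence the set $A:=\{x\in X\mid m_2(\{x\})=1\}$ is either empty or a singleton, and the previous step gives $m_1(A)=1$. This rules out the empty case and forces $A=\{x_0\}$, so that $m_2=\delta_{x_0}$, and also $m_1(\{x_0\})=m_1(A)=1$, so that $m_1=\delta_{x_0}$ as well. The only technicality is the measurability of the map $x\mapsto m_2(\{x\})$ and of $\Delta_X$ itself in the product Borel $\sigma$-algebra; in the second-countable Hausdorff setting considered throughout the paper, $\Delta_X$ is closed and this presents no real obstacle. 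So I do not anticipate any hard step in this proof, it is essentially a bookkeeping consequence of Fubini.
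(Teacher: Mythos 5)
Your proof is correct and follows essentially the same route as the paper's: both compute $(m_1\otimes m_2)(\Delta_X)$ by Fubini as an integral of the measure of singleton slices, deduce that the slice measure equals $1$ almost everywhere, and conclude that both measures are the same Dirac mass (the paper integrates $m_1(\{x\})$ against $m_2$, you integrate $m_2(\{x\})$ against $m_1$, which is an immaterial symmetry). Your version merely spells out the final "hence both are Dirac" step and the measurability caveat in more detail than the paper does.
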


\begin{proof}
By assumption,  we have 
$m_1\otimes m_2 (\Delta_X) = \int_X m_1(\{x\}) \dd m_2 (x) = 1.$
Hence, for $m_2$-almost every $x\in X$, we have $m_1(\{x\})=1$, 
which implies that measures $m_1$ and $m_2$ are identical Dirac measures. 
\end{proof}

\subsection{No Stationary Measures on $X_{V,W}$}

\bq
In this section, we again use
the same notations and assumptions as in Theorem \ref{thmeq}. 
We will prove that 
the space $X_{V,W}$ supports no $\mu$-stationary measures. 
\eq

Recall that $W\subset V$ are real vector spaces,
$G$ is a Zariski connected algebraic subgroup of $\Gl$
preserving $W$ and satisfying $(H1)$, $(H2)$, $(H3)$,
Also recall that $\mu$ is a Zariski dense probability measure on $G$
with compact support, that $\lun$ and $\lpun$ 
are the first Lyapunov exponents of $\mu$ in $W$ and in $W':=V/W$,
and that we are studying the associated random walk on
the $G$-space  $X_{V,W}:=\Pj[V]\smallsetminus\Pj[W]$.

\begin{prop}\label{l1lp1nonrecloiprop} Same notations and assumptions 
as in Theorem \ref{thmeq}.\\
If $\lun \geq \lpun$, then the \ma on $X_{V,W}$ 
is nowhere recurrent in law,
and there exists no $\mu$-stationary probability measure on $X_{V,W}$. 
\end{prop}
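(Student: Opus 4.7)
The plan is to show that no $\mu$-stationary probability measure can exist on $X_{V,W}$ under the hypothesis $\lun \geq \lpun$; the nowhere recurrence in law then follows immediately as the contrapositive of Lemma \ref{urloimesinv}. Fix a complementary subspace $W_s$ of $W$ in $V$ (not $G$-invariant in general) and write every $x \in X_{V,W}$ uniquely as $x = [w + w']$ with $w \in W$ and $w' \in W_s \setminus \{0\}$. Define a continuous ``height'' function $\delta(x) := \|w'\|/\|w + w'\|$, taking values in $(0, 1]$ on $X_{V,W}$ and extending continuously by $0$ to $\Pj[W] \subset \Pj[V]$. Using the block decomposition \eqref{eqnagcgdg} and writing $g_n = b_1 \cdots b_n$, one has
\[
\delta(g_n x) \;\asymp\; \min\!\Bigl(1,\, \|d_{g_n} w'\|\big/\|a_{g_n} w + c_{g_n} w'\|\Bigr),
\]
so $\delta(g_n x) \to 0$ means the $W$-component of $g_n x$ dominates its $W_s$-component.

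Assume by contradiction that such a $\nu$ exists. The stationarity $\mu^{*n} * \nu = \nu$ makes the distribution of $g_n x$ under $\beta \otimes \nu$ equal to $\nu$ for every $n$, so the quantity
\[
p_n(\epsilon) \;:=\; \Pb_{\beta \otimes \nu}\bigl(\delta(g_n x) < \epsilon\bigr) \;=\; \nu\bigl(\{\delta < \epsilon\}\bigr)
\]
is \emph{constant in $n$} and tends to $0$ as $\epsilon \to 0^+$, since $\nu$ is a probability measure supported in $\{\delta > 0\}$. The contradiction will come from showing that, for every fixed $\epsilon > 0$, this same $p_n(\epsilon)$ actually converges to a strictly positive limit as $n \to \infty$.

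For a fixed $x$, I would analyse $\Pb_\beta(\delta(g_n x) < \epsilon)$ in two subcases. When $\lun > \lpun$, Furstenberg's Law of Large Numbers as in \eqref{inegFurstlun}--\eqref{inegFurstlpun}, combined with an elementary cocycle estimate giving $\tfrac{1}{n}\log\|c_{g_n}\| \to \lun$, yields $\delta(g_n x) \to 0$ $\beta$-almost surely, and hence $\Pb_\beta(\delta(g_n x) < \epsilon) \to 1$. When $\lun = \lpun$, the Law of Large Numbers gives only $\tfrac{1}{n}\log \delta(g_n x) \to 0$, and I would instead invoke the Central Limit Theorem for products of random matrices (as collected in Appendix \ref{seclimlaw}): under Zariski density of $\mu$ together with hypothesis $(H2)$, the normalised difference $\tfrac{1}{\sqrt n}(\log\|a_{g_n} w\| - \log\|d_{g_n} w'\|)$ converges in distribution to a centred Gaussian with \emph{strictly positive} variance $\sigma^2 > 0$, and a direct computation then gives $\Pb_\beta(\delta(g_n x) < \epsilon) \to \tfrac{1}{2}$. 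In either case, dominated convergence integrates this over $x \sim \nu$ to make $p_n(\epsilon)$ converge to either $1$ or $\tfrac{1}{2}$; both are strictly positive and independent of $\epsilon$, contradicting the fact that $p_n(\epsilon) = \nu(\{\delta < \epsilon\})$ can be made arbitrarily small.

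The main obstacle is the equality case $\lun = \lpun$, where the Law of Large Numbers is insufficient and one must exploit the $\sqrt n$-scale fluctuations through the Central Limit Theorem. The crucial non-trivial input there is the strict positivity $\sigma^2 > 0$ of the limiting variance, and this is precisely where the non-equivalence hypothesis $(H2)$ enters: the fact that $W$ and $W'$ are not equivalent as $G$-representations is what prevents an exact cancellation of the Lyapunov fluctuations on the two subrepresentations.
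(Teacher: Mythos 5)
Your overall strategy (stationarity forces the law of the ``height'' $\delta(g_nx)$ under $\beta\otimes\nu$ to be independent of $n$, while the dynamics should push $\delta(g_nx)$ towards $0$) is a reasonable distributional variant of the paper's dichotomy, which instead compares $\sup_n\|a_n\|/\|d_n\|<\infty$ (forced by the existence of $\nu$, via the limit measures $\nu_b$) with $\sup_n\|a_n\|/\|d_n\|=\infty$ (forced by $\lun\geq\lpun$). However, your sketch has a genuine gap at exactly the point you identify as the crux. In the equality case $\lun=\lpun$ you assert that the central limit theorem applies to $\log\|a_{g_n}w\|-\log\|d_{g_n}w'\|$ with limiting variance $\sigma^2>0$, and that positivity ``is precisely where $(H2)$ enters.'' This is not a proof, and it cannot be reduced to $(H2)$ alone. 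Writing $\overline G=\rho(G)$ as in \eqref{eqnrhoggl}, the relevant variance is $(\chi-\chi')^{\otimes 2}(\Phi_{\overline\mu})$, where $\chi,\chi'$ are the highest weights of $W,W'$ and $\Phi_{\overline\mu}$ is the covariance $2$-tensor of the Cartan projection; its non-vanishing is equivalent to $(\chi-\chi')(\Lie{a}_{\overline\mu})\neq 0$, and a priori $\chi-\chi'$ could kill the whole linear span $\Lie{a}_{\overline\mu}$. Ruling this out requires: (i) that $\chi\neq\chi'$, which needs $(H1)$ (proximality and irreducibility, so that the highest weight spaces are lines) in addition to $(H2)$, as in Lemma \ref{lemhigdis}; (ii) the structural fact $\Lie{a}\cap\Lie{s}\subset\Lie{a}_{\overline\mu}$ for the semisimple part $S$ of $\overline G$; and (iii) a separate analysis of the central direction, where $(\chi-\chi')(\kappa(g))$ reduces to the group homomorphism $\delta(a,d)=\tfrac1r\log|\det a|-\tfrac1{r'}\log|\det d|$, whose non-degeneracy uses Zariski density of $\mu$. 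This is the content of the second and third cases in the paper's proof of Lemma \ref{liminfdnaninfinie}; note also that the paper avoids the CLT altogether (which is not among the results collected in Appendix \ref{seclimlaw}) and instead uses the Law of the Iterated Logarithm together with the recurrence of centered scalar random walks, since unboundedness along a subsequence suffices.

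There is a second gap in your ``dynamical'' step. To get $\delta(g_nx)\to 0$ (or probability tending to $\tfrac12$) you need a lower bound on $\|a_nw+c_nw'\|$ of the order of $\max(\|a_nw\|,\|c_nw'\|)$; if the two terms nearly cancel, then $g_nx$ stays far from $\Pj[W]$ and $\delta(g_nx)$ stays close to $1$, destroying the argument. Controlling this cancellation for $\nu$-almost every $x$ and $\beta$-almost every $b$ is not an ``elementary cocycle estimate'': it is exactly what the paper obtains by passing to accumulation points $\pi$ of $b_1\cdots b_n/\|b_1\cdots b_n\|$ and invoking Lemma \ref{pasdemasseausousespace} to see that $\pi_*\nu=\nu_b$ is well defined and gives no mass to $\Pj[W]$. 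Your proposal would need an analogous input (a statement that $\nu$ charges no proper projective subspace, plus the convergence of the normalized products) to be made rigorous; as written, both the almost sure convergence $\delta(g_nx)\to0$ in the strict case and the claimed limit $\tfrac12$ in the equality case are unjustified.
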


\begin{proof} By Lemma \ref{urloimesinv} the first assertion follows from the second one.
This second assertion is a consequence of the following Lemmas 
\ref{liminfdnanfinie} and \ref{liminfdnaninfinie}.
\end{proof}

Let $B=G^{\N^*}$ and
$\beta=\mu^{\otimes\N^*}$. For $b=(b_1,b_2,\ldots)$ in $B$  
we write as in \eqref{eqnagcgdg}:
\begin{equation}
\label{eqnancndn}
b_1\cdots b_n=\begin{pmatrix}
a_n & c_n \\
0 & d_n
\end{pmatrix}.
\end{equation}

\begin{lem}\label{liminfdnanfinie}
Same notations and assumptions 
as in Theorem \ref{thmeq}.
If there exists a $\mu$-stationary probability measure
on $X_{V,W}$, 
then for $\beta$-almost every $b\in B$, 
we have
\begin{equation}
\label{eqnlimdan1}
\sup_{n\geq 1}\;  \n[a_n]/\n[d_n]\; <\; \infty.
\end{equation}
\end{lem}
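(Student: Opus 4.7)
The plan is to use the limit-measure construction of Lemma \ref{nub}. Assume $\nu$ is a $\mu$-stationary probability measure on $X_{V,W}$. Then for $\beta$-almost every $b\in B$, the sequence $(g_n)_*\nu$, where $g_n:=b_1\cdots b_n$, converges weakly to a probability measure $\nu_b$ on $X_{V,W}$. Since the function $u_1([w,w']):=\|w\|/\|w'\|$ appearing in the proof of Proposition \ref{PmuUCH} is proper on $X_{V,W}$, tightness of $\nu_b$ yields: for every $\varepsilon>0$, there exists $M>0$ such that, for all $n$ large enough (depending on $b$),
\[
\nu\bigl(\{x\in X_{V,W}:u_1(g_nx)\leq M\}\bigr)\geq 1-2\varepsilon.
\]

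Using the block form \eqref{eqnancndn}, for $x=[w,w']\in X_{V,W}$ one has $u_1(g_nx)=\|a_nw+c_nw'\|/\|d_nw'\|$. If $[w_1,w']$ and $[w_2,w']$ lie in the same fibre of the projection $\pi:X_{V,W}\to\mathbb{P}(W')$ defined by $[w,w']\mapsto[w']$, and both satisfy $u_1\circ g_n\leq M$, the triangle inequality gives
\[
\|a_n(w_1-w_2)\|\leq 2M\,\|d_nw'\|.
\]
I would then disintegrate $\nu=\int\nu_{[w']}\,d\nu'([w'])$ over $\pi$. The projection $\nu':=\pi_*\nu$ is a $\mu$-stationary measure on $\mathbb{P}(W')$; by $(H1)$ combined with the Zariski density of $\mu$, it is the unique such measure and, by standard results recalled in Appendix \ref{seclimlaw}, it charges no proper projective subspace. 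By proximality of $G$ on $W'$, $\|d_nw'\|\asymp\|d_n\|$ for $\nu'$-almost every $[w']$ and $\beta$-almost every $b$; by proximality of $G$ on $W$, $\|a_nv\|\asymp\|a_n\|\cdot\|v\|$ for $v\in W$ outside a $b$-dependent proper subspace. Provided the fibre measure $\nu_{[w']}$ is not a Dirac mass on a $\nu'$-positive set of $[w']$, selecting a pair $[w_1,w'],[w_2,w']\in\Supp\nu$ with $w_1-w_2$ in a generic direction and combining the three estimates yields $\|a_n\|/\|d_n\|\leq C(b)<\infty$, $\beta$-almost surely.

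The main obstacle is the degenerate case where, for $\nu'$-almost every $[w']$, the fibre measure $\nu_{[w']}$ is a Dirac mass, so that $\nu$ is concentrated on a measurable section $\sigma:\mathbb{P}(W')\to X_{V,W}$. I would rule this out by arguing that $\mu$-stationarity of $\nu$ together with the Zariski density of $\mu$ force $\sigma$ to be $G$-equivariant, so that its image is a $G$-invariant projective subvariety of $\mathbb{P}(V)$ disjoint from $\mathbb{P}(W)$. Schur's lemma, via hypotheses $(H1)$ and $(H2)$, would then promote this to a $G$-invariant linear complement of $W$ in $V$, contradicting $(H3)$. Upgrading the measurable section $\sigma$ to an algebraic and then linear object is the delicate technical point of the proof.
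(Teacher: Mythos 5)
Your proposal takes a genuinely different route from the paper's, and it contains a gap that is not merely technical. The degenerate case you isolate at the end --- where the fibre measures $\nu_{[w']}$ of the disintegration over $\mathbb{P}(W')$ are Dirac masses --- is exactly where the argument breaks, and the proposed repair does not work as written: a measurable, essentially $\Gamma_\mu$-equivariant section of $p$ is not a projective subvariety, and no Schur-type argument upgrades it to a $G$-invariant linear complement of $W$. Ruling out such invariant configurations inside the fibres of $p$ is of the same order of difficulty as the uniqueness argument of Chapter \ref{thmequnicite}, which has to invoke the full classification of stationary measures on projective spaces of reductive groups (Proposition \ref{BQorbitescompactesmeasures}) via the difference map into $\mathbb{P}(W\oplus W')$; nothing that heavy should be needed for the present lemma. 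The non-degenerate branch also leaves real genericity issues open: you need $\|a_n(w_1-w_2)\|\gtrsim \|a_n\|\,\|w_1-w_2\|$ and $\|d_nw'\|\gtrsim\|d_n\|\,\|w'\|$ \emph{uniformly in $n$}, which forces $w_1-w_2$ and $w'$ to avoid the kernels of all accumulation points of $a_n/\|a_n\|$ and $d_n/\|d_n\|$. These bad subspaces depend on $b$, so you must produce, for $\beta$-almost every $b$, pairs in $\Supp\nu$ lying in a common fibre whose difference avoids them; that requires knowing that $\nu$ and its conditionals charge no proper subspace, i.e.\ an analogue of Lemma \ref{pasdemasseausousespace}, which you never establish.

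The paper's proof sidesteps both problems. For $\beta$-a.e.\ $b$ it takes an accumulation point $\pi\in\End(V)$ of the normalized products $p_n=(b_1\cdots b_n)/\|b_1\cdots b_n\|$. Lemma \ref{pasdemasseausousespace} (a $\mu$-stationary measure on $X_{V,W}$ charges no proper projective subspace, because by $(H1)$--$(H3)$ the only $G$-invariant candidate is $W$, which already has measure zero) gives $\nu(\mathbb{P}(\Ker\pi))=0$, so $(p_n)_*\nu\to\pi_*\nu$ along the subsequence; by Lemma \ref{nub} this limit is $\nu_b$, hence $\nu_b(\mathbb{P}(\Ima\pi))=1$ while $\nu_b(\mathbb{P}(W))=0$ almost surely, so $\Ima\pi\not\subset W$. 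Reading this off the block form \eqref{eqnancndn} (if $\|a_{n_k}\|/\|d_{n_k}\|\to\infty$ then the $d$-block of any limit $\pi$ vanishes and $\Ima\pi\subset W$) yields $\sup_n\|a_n\|/\|d_n\|<\infty$. If you want to salvage your approach, the one missing ingredient is precisely Lemma \ref{pasdemasseausousespace}; but once that lemma is available, the paper's argument is shorter and dispenses with the disintegration and the Dirac-fibre dichotomy altogether.
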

The proof of Lemma \ref{liminfdnanfinie} will be given in Section \ref{secaccpoi}.
It relies on the properties of the limit probability measures $\nu_b$.

\begin{lem}\label{liminfdnaninfinie}
Same notations and assumptions 
as in Theorem \ref{thmeq}.
If $\lun\geq \lpun$, 
then for $\beta$-almost every $b\in B$, 
one has 
\begin{equation}\label{eqnlimdan2}
\sup_{n\geq 1}\;  \n[a_n]/\n[d_n]\; =\; \infty.
\end{equation}
\end{lem}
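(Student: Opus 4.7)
Set $Z_n := \log\|a_n\| - \log\|d_n\|$, so that the conclusion \eqref{eqnlimdan2} is equivalent to $\sup_n Z_n = +\infty$ almost surely. The plan is to distinguish the two subcases of the hypothesis $\lun \geq \lpun$.

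\emph{Strict case $\lun > \lpun$.} Since $\mu$ is Zariski dense and $G$ acts irreducibly and proximally on both $W$ and $W'$, the law of large numbers for Zariski-dense random walks (as recalled in Appendix \ref{seclimlaw}) yields
\[
\tfrac{1}{n}\log\|a_n\| \xrightarrow[n\to\infty]{\text{a.s.}} \lun
\qquad\text{and}\qquad
\tfrac{1}{n}\log\|d_n\| \xrightarrow[n\to\infty]{\text{a.s.}} \lpun.
\]
Subtracting gives $Z_n/n \to \lun-\lpun > 0$ a.s., and in particular $\sup_n Z_n = +\infty$.

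\emph{Equality case $\lun = \lpun$.} Now the LLN gives only $Z_n/n\to 0$, so finer information on the fluctuations of $Z_n$ is required. I invoke the law of the iterated logarithm collected in Appendix \ref{seclimlaw}, applied to the scalar cocycle $Z_n$ over the Zariski-dense walk $\mu$ in the reductive image of $G$ in $\mathrm{GL}(W)\times\mathrm{GL}(W')$. Provided the asymptotic variance $\sigma^2 := \lim_{n\to\infty} \mathrm{Var}(Z_n)/n$ is strictly positive, the LIL asserts
\[
\limsup_{n\to\infty} \frac{Z_n}{\sqrt{2n\sigma^2\log\log n}} \;=\; 1 \quad \text{a.s.},
\]
which at once forces $\sup_n Z_n = +\infty$ a.s.

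The main obstacle is therefore the strict positivity $\sigma^2 > 0$. By the standard variance formula, $\sigma^2$ vanishes exactly when the additive cocycle $(g,(\bar w, \bar w')) \mapsto \log\|a_g w\|/\|w\| - \log\|d_g w'\|/\|w'\|$ on $G \times \mathbb{P}(W) \times \mathbb{P}(W')$ is cohomologous to a constant for the unique $\mu$-stationary measure on $\mathbb{P}(W)\times\mathbb{P}(W')$. A standard Livsic-type argument for Zariski-dense walks (see \cite{Livre}) shows that such a coboundary relation would force the irreducible representations on $W$ and $W'$ to be isomorphic up to a character, contradicting hypothesis $(H2)$. Thus $(H2)$ is precisely what delivers $\sigma^2 > 0$, completing the argument.
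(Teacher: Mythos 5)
Your strict case is exactly the paper's, and your plan for the equality case (a scalar LIL for $Z_n$, reduced to positivity of the asymptotic variance $\sigma^2$) is viable: $\sigma^2>0$ is indeed true under the hypotheses. The gap is in how you justify $\sigma^2>0$, and it occurs at two levels. First, the conclusion you extract from degeneracy --- that $W$ and $W'$ would be ``isomorphic up to a character'' --- does \emph{not} contradict $(H2)$: two irreducible representations can differ by a nontrivial character of the centre of $\overline{G}$ and still be inequivalent, so as written the final contradiction is a non sequitur. What a degenerate variance must be played against is the distinctness of the \emph{highest weights} $\chi\neq\chi'$ (Lemma \ref{lemhigdis}, which does follow from $(H1)$ and $(H2)$ because proximality forces one-dimensional highest weight spaces). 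Second, getting from $\sigma^2=0$ to $\chi=\chi'$ genuinely requires splitting $\Lie[a]$ into semisimple and central parts. The general non-degeneracy theorem for Zariski dense walks (\cite[Thm 13.19]{Livre}) only guarantees that the covariance tensor spans $\Lie[a]\cap\Lie[s]$; so $\sigma^2=0$ forces $(\chi-\chi')$ to vanish on $\Lie[a]\cap\Lie[s]$, but says nothing a priori about the central directions, where the walk is a plain additive i.i.d.\ walk whose variance can perfectly well vanish for a Zariski dense measure. On the centre one must argue by hand: once $(\chi-\chi')$ kills $\Lie[a]\cap\Lie[s]$, the quantity $Z_n$ becomes the i.i.d.\ sum $\sum_i\delta(b_i)$ for the determinant character $\delta(a,d)=\tfrac1r\log|\det a|-\tfrac1{r'}\log|\det d|$; zero variance makes $\delta(b_1)$ a.s.\ constant, the constant is $0$ because $\lun=\lpun$, Zariski density then kills $\delta$ on all of $\overline{G}$, hence $(\chi-\chi')$ also vanishes on the central part of $\Lie[a]$, and only at that point do you reach $\chi=\chi'$ and the contradiction.

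This missing analysis is precisely the paper's trichotomy: its second case ($(\chi-\chi')$ nonzero on $\Lie[a]_{\overline{\mu}}$) is your ``apply the LIL'' step, carried out via the vector-valued LIL for the Cartan projection, and its third case is exactly the central-direction argument sketched above --- which the paper then closes not with an LIL but with the recurrence of centred, nondegenerate real random walks. So your architecture is salvageable, but the sentence ``$(H2)$ is precisely what delivers $\sigma^2>0$'' conceals the actual work, and the route you propose for it does not close.
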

The proof of Lemma \ref{liminfdnaninfinie} will be given 
in Section \ref{secprocar}.
It relies on  the law of large numbers 
and on the law of the iterated logarithm for the random variables
$\log\| a_n\|\!-\!\log\|d_n\|$.

\subsection{Using the Limit Measures}
\label{secaccpoi}

\bq
The aim of this section is to prove Lemma \ref{liminfdnanfinie}. 
\eq

We will need the following analog of \cite[Prop. 3.7]{Livre}
for a non-irreducible action. 

\begin{lem}\label{pasdemasseausousespace}
Same notations and assumptions 
as in Theorem \ref{thmeq}.
Let  $\nu$ be 
a $\mu$-stationary probability measure
on $\Pj[V]$ such that $\nu(\Pj[W])=0$.
Then 
for every proper subspace $U$ of $V$, 
we have
$
\nu(\Pj[U])=0.
$
\end{lem}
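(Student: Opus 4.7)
The plan is to argue by contradiction, adapting the standard template behind \cite[Prop.~3.7]{Livre} to the non-irreducible setting using hypotheses (H1) and (H3). Suppose there exists a proper subspace $U \subsetneq V$ with $\nu(\Pj[U]) > 0$, and let $r$ be the minimal dimension of such a subspace (so $r \geq 1$, since $\Pj[\{0\}]=\emptyset$). The function $U' \mapsto \nu(\Pj[U'])$ is upper semi-continuous on the compact Grassmannian $\Gr_r(V)$: whenever $U'_n \to U'$, any accumulation point of lines $[v_n] \in \Pj[U'_n]$ lies in $\Pj[U']$, so $\limsup \fcar_{\Pj[U'_n]} \leq \fcar_{\Pj[U']}$ and Fatou's lemma applies. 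Hence the supremum $s := \sup\{\nu(\Pj[U'])\,:\,U' \in \Gr_r(V)\}$ is attained on a nonempty compact set $\mathcal{F}\subset\Gr_r(V)$, with $s>0$.

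The set $\mathcal{F}$ is then finite: for distinct $U_1,U_2 \in \mathcal{F}$ one has $\dim(U_1\cap U_2) < r$, so minimality of $r$ forces $\nu(\Pj[U_1]\cap\Pj[U_2]) = \nu(\Pj[U_1\cap U_2]) = 0$, and hence any $N$ elements of $\mathcal{F}$ account for $\nu$-mass at least $Ns$, yielding $N \leq 1/s$. Moreover $\Gamma := \Supp\mu$ preserves $\mathcal{F}$: for any $U \in \mathcal{F}$, $\mu$-stationarity combined with the maximality of $s$ gives
\[s \;=\; \nu(\Pj[U]) \;=\; \int_G \nu(\Pj[g^{-1}U])\,\dd\mu(g) \;\leq\; s,\]
so $\nu(\Pj[g^{-1}U]) = s$ for $\mu$-almost every $g$; since $g \mapsto g^{-1}U$ is continuous and the finite set $\mathcal{F}$ is closed in $\Gr_r(V)$, this in fact holds for every $g\in\Gamma$, and finiteness of $\mathcal{F}$ promotes each such $g$ to a permutation of $\mathcal{F}$.

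Consequently the stabilizer in the group $\langle\Gamma\rangle$ of any $U\in\mathcal{F}$ has finite index; Zariski density of $\langle\Gamma\rangle$ in $G$ together with Zariski connectedness of $G$ then force this stabilizer to have Zariski closure equal to $G$, so $U$ is $G$-invariant. By (H1), the $G$-invariant subspaces $U\cap W\subset W$ and $(U+W)/W\subset V/W$ must each be trivial or the full ambient space, leaving four possibilities for $U$: namely $U=0$, $U=W$, $U=V$, or $U$ is a $G$-invariant complement of $W$; the last is excluded by (H3). Since $U$ is proper and nonzero, we conclude $U = W$, whence $s = \nu(\Pj[W]) = 0$ by assumption, a contradiction. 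The main obstacle is extracting the finite $\Gamma$-stable family $\mathcal{F}$ from the minimality of $r$ and the upper semi-continuity on $\Gr_r(V)$; once this is in place, the remainder is a routine Zariski-density argument combined with the classification of $G$-invariant subspaces of $V$ provided by (H1) and (H3).
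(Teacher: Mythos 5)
Your proposal is correct and follows essentially the same route as the paper: contradiction via the minimal dimension $r_0$, finiteness of the set of subspaces of maximal $\nu$-mass, Zariski density plus connectedness to force $G$-invariance, and the classification of proper $G$-invariant subspaces via (H1) and (H3) to conclude $U=W$, contradicting $\nu(\Pj[W])=0$. You merely fill in details the paper leaves implicit (upper semi-continuity on the Grassmannian to attain the supremum, and the finite-index stabilizer argument), so no substantive difference.
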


\begin{proof}
Assume there exists a proper subspace $U$ of $V$ 
such that $\nu(\Pj[U])>0$. Let $r_0$ be the minimal dimension
of such a subspace $U$.
If $U_1$ and $U_2$ are two distinct vector subspaces of dimension $r_0$, 
one has the equality
\[\nu(\Pj[U_1]\cup\Pj[U_2])=\nu(\Pj[U_1])+\nu(\Pj[U_2]).\] 
Let $\alpha: = \sup \{\nu(\Pj[U]) \,|\, U\subset V,\, \dim U = r_0\}>0\;\;$
and consider the set 
\[F= \{ U\subset V\,|\,\nu(\Pj[U])=\alpha,\,  \dim U = r_0\}.\]
This set is finite and non-empty. By $\mu$-stationarity of $\nu$,
for $\mu$-almost every $g\in G$, we have $g^{-1}F=F$. 
Therefore, since $\mu$ is Zariski dense in $G$, 
this set $F$ is $G$-invariant.
Since $G$ is Zariski connected, all the subspaces $U$ belonging to $F$ are $G$-invariant.
But by $(H1)$, $(H2)$ and $(H3)$, the only proper 
$G$-invariant subspace of $V$ is $W$.
This is contradictory since, by assumption, we have $\nu(\Pj[W])=0$.
\end{proof}

\begin{proof}[Proof of Lemma \ref{liminfdnanfinie}]
We assume also that
there exists a $\mu$-stationary probability measure $\nu$
on  $X_{V,W}$.
In order to prove \eqref{eqnlimdan1}, it is enough to check that
for $\beta$-almost every $b\in B$, 
for all accumulation points $\pi$ in ${\rm End}(V)$ of the sequence 
$p_n:=\frac{b_1\cdots b_n}{\n[b_1\cdots b_n]}$,
the image of $\pi$ is not included in $W$ :
\begin{equation}
\label{eqnimpinw}
\Ima \pi \not\subset W.
\end{equation}
Lemma \ref{pasdemasseausousespace}
shows that $\nu(\Pj[\Ker \pi])=0$, 
hence the image probability measure $\pi_*\nu$ is well-defined
and the sequence ${p_n}_*\nu$ weakly converges to $\pi_*\nu$. 
By Lemma \ref{nub} this sequence ${p_n}_*\nu$ also weakly converges to $\nu_b$, 
and therefore we have
\[\pi_*\nu=\nu_b.\]
Therefore, for $\beta$-almost all $b$ in $B$, one has, for all accumulation point $\pi$,
\[\nu_b(\Pj[\Ima\pi]) =1.\]
Since $\nu(\Pj[W])=0$, one also has, for $\beta$-almost all $b$ in $B$,
\[\nu_b(\Pj[W])=0, \]
and hence the images $\Ima\pi$ are not contained in $W$. 
This proves \eqref{eqnimpinw}.
\end{proof}

\subsection{Using the Cartan Projection}
\label{secprocar}
 
\bq
The aim of this section is to prove Lemma \ref{liminfdnaninfinie}. 
\eq 

Let $\rho$ be the natural projection
\begin{equation}
\label{eqnrhoggl}
\rho\; :\;
G\longrightarrow \mathrm{GL}(W)\times\mathrm{GL}(W')
\;\; ;\;\; 
\begin{pmatrix}
a & c \\
0 & d
\end{pmatrix}
\longmapsto (a,d)\, .
\end{equation}
The image group
$
\overline{G}:=\rho(G)
$
is a reductive subgroup
of $\mathrm{GL}(W)\times\mathrm{GL}(W')$.
The image measure
$
\overline{\mu}:=\rho_*\mu
$
is a Zariski dense probability measure on $\overline{G}$. 

The proof of Lemma \ref{liminfdnaninfinie} will use the notations of 
Appendix \ref{seclimlaw}
with the reductive group $\overline{G}$
and its probability measure $\overline{\mu}$. 
In particular, 
$\Lie[g]$ is the Lie algebra of $\overline{G}$,
$\Lie[a]$ is the Lie algebra of a maximal split torus of  $\overline{G}$,
$\kappa$ is the Cartan projection,
$\sigma_{\overline{\mu}}$ is the Lyapunov vector, 
$\Phi_{\overline{\mu}}$ is the covariance $2$-tensor,
$\Lie[a]_{\overline{\mu}}$ is its linear span, and
$K_{\overline{\mu}}$ is the unit ball of $\Lie[a]_{\overline{\mu}}$.

We will also use the following two lemmas. 
We set $r=\dim W$ and $r'=\dim W'$.

\begin{lem}
\label{lemhigdis}
The highest weights $\chi$ and $\chi'$ of the representations 
of $\overline{G}$ in $W$ and $W'$ are distinct. 
\end{lem}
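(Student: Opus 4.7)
The plan is to argue by contradiction, exploiting hypothesis $(H2)$ and the fact that a proximal irreducible representation of a Zariski connected reductive real algebraic group is determined up to equivalence by its highest weight. So suppose, for contradiction, that $\chi=\chi'$ as characters of the maximal split torus of $\overline{G}$.

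First I would verify that the representations of the reductive group $\overline{G}$ on $W$ and on $W'$ remain irreducible and proximal: this is immediate since they coincide with the representations of $G$ on $W$ and on $W'$ composed with the factorization of $G\to \mathrm{GL}(W)\times \mathrm{GL}(W')$ through $\rho$, and those $G$-representations satisfy $(H1)$. In particular the highest weight spaces in $W$ and in $W'$, relative to a fixed choice of positive Weyl chamber in $\Lie[a]$, are one-dimensional, and $\chi,\chi'$ are the corresponding characters of the maximal split torus.

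Next I would invoke the classical fact (see \cite[Chap. 6]{Livre}) that for a Zariski connected reductive real algebraic group, two irreducible proximal real representations sharing the same highest weight character on the maximal split torus are equivalent. Applied here, the assumption $\chi=\chi'$ would yield a $\overline{G}$-equivariant linear isomorphism $T:W\to W'$. Composing with $\rho$, the map $T$ becomes $G$-equivariant, producing an equivalence of the $G$-representations on $W$ and $W'$. This directly contradicts $(H2)$ and finishes the proof.

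The one subtle point I would be careful about is the real versus complex setting: the statement that the highest weight determines the representation is standard over $\C$, and its extension to proximal irreducible real representations is what one needs here. Since proximality guarantees a one-dimensional highest weight line defined over $\R$, the construction of the intertwiner $T$ can be carried out over $\R$ (for instance by matching highest weight vectors and extending $\overline{G}$-equivariantly via the Iwasawa decomposition), so no descent issue arises. This is the only step where some care is required; otherwise the argument is a short contradiction.
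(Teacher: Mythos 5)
Your proof is correct and follows essentially the same route as the paper: both arguments reduce the claim to the fact that an irreducible proximal representation of the Zariski connected reductive group $\overline{G}$ is determined up to equivalence by its highest weight (the paper cites Bourbaki, Chap.~8.6.3, and phrases it contrapositively rather than by contradiction), with $(H1)$ supplying irreducibility and the one-dimensional highest weight lines and $(H2)$ supplying non-equivalence. Your extra remark on the real-versus-complex issue and on transporting the intertwiner back to $G$ through the surjection $\rho$ is sound and only makes explicit what the paper leaves implicit.
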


\begin{proof}
Since $\overline{G}$
is Zariski connected, Condition $(H1)$ tells us that $W$ and $W'$
are irreducible representations of $\Lie[g]$ and that 
their highest weight spaces
are one dimensional. Condition $(H2)$ tells us that these
representations of $\Lie[g]$ are not equivalent.
Therefore as in \cite[Chap 8.6.3]{BourGALie78}, the highest weights 
$\chi$ and $\chi'$ must be distinct.
\end{proof}

\begin{lem}\label{centreoverG}
The center $Z$ of $\overline{G}$ is equal to $Z=\{(
\alpha I_r ,\beta I_{r'})\in \overline{G}\,|\, \alpha,\,\beta \in\R^*\}.$
\end{lem}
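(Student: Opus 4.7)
The containment $\supset$ is immediate: any pair of scalar matrices commutes with every element of $\mathrm{GL}(W)\times\mathrm{GL}(W')$, a fortiori with every element of $\overline{G}$. The work lies in the reverse inclusion. My plan is to prove that the commutant of the $\overline{G}$-action on $W$ inside $\End(W)$ reduces to $\R\,I_r$, and symmetrically on $W'$. Once this is established, any central element $z=(z_W,z_{W'})\in Z$ is forced to take the form $(\alpha I_r,\beta I_{r'})$; the scalars $\alpha,\beta$ belong to $\R^*$ since $z$ is invertible. Note that centrality of $z$ in $\overline{G}\subset\mathrm{GL}(W)\times\mathrm{GL}(W')$ is equivalent to $z_W$ commuting with the projection of $\overline{G}$ onto $\mathrm{GL}(W)$ and $z_{W'}$ commuting with its projection onto $\mathrm{GL}(W')$, so the two factors decouple.

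To identify the commutant on the $W$-side, I would combine both parts of $(H1)$. Proximality of the action on $W$ supplies a rank-one endomorphism $\pi\in\End(W)$ arising as a limit $\pi=\lim_n\lambda_n\gamma_n$ with $\lambda_n>0$ and $\gamma_n$ in the image of $\overline{G}$ in $\mathrm{GL}(W)$. If $T\in\End(W)$ commutes with every element of this image, then $T$ commutes with each $\lambda_n\gamma_n$ and hence, by continuity of matrix multiplication, with $\pi$ itself. Writing $L:=\Ima\pi$ for the one-dimensional image of $\pi$, the relation $T\pi=\pi T$ forces $T(L)\subset L$, so $T$ acts on $L$ as multiplication by some scalar $\alpha\in\R$.

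Irreducibility then finishes the job. The operator $T-\alpha I_r$ still commutes with the image of $\overline{G}$ in $\mathrm{GL}(W)$, and its kernel contains the nonzero line $L$; this kernel is therefore a nonzero $\overline{G}$-stable subspace of $W$ and, by irreducibility, must equal $W$, giving $T=\alpha I_r$. Applying the same reasoning verbatim to $W'$ (using the irreducibility and proximality of $\overline{G}$ on $W'$ provided by $(H1)$) shows that $z_{W'}=\beta I_{r'}$ for some $\beta\in\R^*$, which completes the proof. No genuine obstacle arises; the only point meriting care is the passage from the limit definition of proximality to the commutation $T\pi=\pi T$, but this is just continuity of multiplication in $\End(W)$. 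Observe in particular that hypothesis $(H3)$ is not needed for this lemma.
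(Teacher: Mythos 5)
Your proof is correct and follows essentially the same route as the paper: both reduce the lemma to showing that the commutant of $\overline{G}$ in $\End(W)$ (and likewise in $\End(W')$) is $\R$, using irreducibility together with proximality. You merely unpack the paper's one-line appeal to Schur's lemma plus proximality into an explicit argument with the rank-one limit $\pi$, which is a valid (and self-contained) way of establishing the same key fact.
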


\begin{proof} By Schur's lemma,
the commutant of $\overline{G}$ in ${\rm End}(W)$
is a division algebra.
Since the representation of $\overline{G}$ in $W$ is proximal,
this commutant is the field $\R$ of scalar matrices.
Therefore $Z$ acts on $W$ (and also on $W'$) by scalar matrices. 
\end{proof} 

\begin{proof}[Proof of Lemma \ref{liminfdnaninfinie}]
Fix norms on $W$ and $W'$ as in Lemma \ref{representationgeometriquecartaniwas},
so that, for any element $g=(a,d)$ in $\overline{G}$
with $a\in\mathrm{GL}(W)$, $d\in\mathrm{GL}(W')$, one has 
\begin{equation}
\label{eqnlogadg}
\log \|a\|\; =\; \chi(\kappa(g))
\;\; {\rm and}\;\;
\log \|d\|\; =\; \chi'(\kappa(g)) 
\end{equation}
 
In particular, the first Lyapunov exponents in $W$ and $W'$ are given by
\begin{equation*}
\lun\; =\; \chi(\sigma_{\overline{\mu}})
\;\; {\rm and}\;\;
\lpun\; =\;\chi'(\sigma_{\overline{\mu}}). 
\end{equation*}
Let $\overline{B}=\overline{G}^{\N^*}$ and $\overline{\beta}=\overline{\mu}^{\otimes \N^*}$. 
For $b=(b_1,b_2,\ldots)\in\overline{B}$, we write
$b_1\cdots b_n=(a_n,\,d_n).$
We distinguish three cases:
\vspace{1em}

\noindent {\bf First case :} $\lun > \lpun$.\;\;
In this case one has $(\chi-\chi')(\sigma_{\overline{\mu}}) >0$. 
According to \eqref{eqnlogadg} and the Law of Large Numbers \ref{corinv}, 
for $\overline{\beta}$-almost every $b\in \overline{B}$, 
we have
\begin{equation*}
\lim_{n\rightarrow\infty}\log (\n[a_n]/\n[d_n])
\; =\;
\lim_{n\rightarrow\infty}(\chi-\chi')(\kappa(b_1\cdots b_n))
\; =\; 
\infty . 
\end{equation*}

\noindent {\bf Second case :} $\lun=\lpun$ and $ (\chi-\chi')(\Lie[a]_{\overline{\mu}})\neq 0$.\;
In this case, one has $(\chi-\chi')(\sigma_{\overline{\mu}})=0$
and there exists $x$ in the unit ball $K_{\overline{\mu}}$ of 
$\Lie[a]_{\overline{\mu}}$
such that $(\chi-\chi')(x)>0.$
According to the Law of the Iterated Logarithm \ref{corinv}, 
for $\overline{\beta}$-almost every $b\in\overline{B}$, 
there exists an increasing sequence  of integers $n_i$ such that
\[\lim_{i\rightarrow\infty}\frac{\kappa(b_1\cdots b_{n_i})-n_i\,\sigma_{\widetilde{\mu}}}{\sqrt{2n_i\log \log n_i}}\; =\; x,\]
and therefore such that
\begin{equation*}
\lim_{i\rightarrow\infty}\log (\n[a_{n_i}]/\n[d_{n_i}])
\; =\;
\lim_{i\rightarrow\infty}(\chi-\chi')(\kappa(b_1\cdots b_{n_i}))
\; =\; 
\infty . 
\end{equation*}

\noindent {\bf Third case :} $\lun=\lpun$ and $(\chi-\chi')(\Lie[a]_{\overline{\mu}})= 0$.\;
Let 
$$
S:=\{(a,d)\in \overline{G}\mid |\det a|=|\det d|=1\}.
$$
Since the group $\overline{G}$ is reductive,
by Lemma \ref{centreoverG}, the subgroup $S$ is semisimple.
Let $\Lie[s]$ be the Lie algebra of $S$.
By \cite[Thm 13.19]{Livre}, 
we have $\Lie[a]\cap \Lie[s]\subset \Lie[a]_{\overline{\mu}}$,
and thus also
\begin{equation}
\label{eqnchias}
(\chi-\chi')(\Lie[a]\cap \Lie[s])=0.
\end{equation}
We introduce the group morphism $\delta$ defined by:
$$
\delta\; :\; \overline{G}\longrightarrow \R
\;\; ;\;\;
(a,d)\longmapsto \frac{1}{r}\log|\det a| -\frac{1}{r'}\log|\det d|.
$$ 
For every $g=(a,d)$ in $\overline{G}$, 
we can write $g=sz$ with $s\in S$ and $z\in Z$.
Using Equations \eqref{eqnlogadg}, \eqref{eqnchias}
and the equality $\kappa(g)=\kappa(s)+\kappa(z)$,
we compute, 
\begin{equation}
\label{eqnchichi}
\log\left(\n[a]/\n[d]\right)
\; =\;
(\chi-\chi')(\kappa(g))
\; =\;
(\chi-\chi')(\kappa(z))
\; =\;
\delta(z)
\; =\;
\delta(g).
\end{equation}
We want to describe the behavior of  the random variable 
$
T_n=\log \left(\n[a_n]/\n[d_n]\right)
$
on $\overline{B}$ where as above $(a_n,\,d_n)= b_1\cdots b_n$.
Using Equation \eqref{eqnchichi}, we see that 
$$
T_n =\delta(b_1\cdots b_n)= \delta(b_1)+\cdots +\delta(b_n)
$$
is the sum of $n$ real-valued independent and identically distributed
random variables $\delta(b_i)$. 
Note that the law of the variable $\delta(b_1)$ has compact support.
Since $\lun=\lpun$, we have $\mathbb{E}(\delta(b_1))=\frac1n\mathbb{E}(T_n)\xrightarrow[\nti]{} 0$.
Thus the variable $\delta(b_1)$ is centered. 

If this random variable $\delta(b_1)$ were almost surely $0$, 
it would mean that for $\overline{\mu}$-almost every 
$g=(a,\,d)\in \overline{G}$, we have $\delta(g)=0$. 
Since $\overline{\mu}$ is Zariski dense in $\overline{G}$,
this would imply $\delta(G)=0$, or, equivalently,
\begin{equation}
\label{eqndegnul}
(\chi-\chi')(\Lie[z])=0,
\end{equation}
where $\Lie[z]\subset\Lie[a]$ is the Lie algebra of $Z$. 
Equalities \eqref{eqnchias} and \eqref{eqndegnul}
would tell us that the highest weights $\chi$ and $\chi'$ were equal.
This would contradict Lemma \ref{lemhigdis}.  

Therefore this centered variable $\delta(b_1)$ is not almost surely $0$.
Thus the classical recurrence properties of real random walks 
(cf e.g. \cite[Thm 3.38]{Brei})
tell us that $\sup_{n\geq 1}T_n=\infty$ almost surely. 
\vspace{1em}

In each of these three cases, we have checked  \eqref{eqnlimdan2}.
\end{proof}

\section{Uniqueness of the Stationary Measure }\label{thmequnicite}
\label{secunista}

\bq
The main aim of this chapter is to prove the uniqueness of the stationary measure on $X_{V,W}$ (Proposition \ref{unicitedirac}).
\eq

\subsection{No Stationary Measures on $Y_{V,W}$}

\bq
The proof of uniqueness will rely on the following  Lemma 
\ref{lemyvw}.
\eq

We keep the notations and assumptions 
of Theorem \ref{thmeq}.
Let $p$ be the projection
$$ 
p\; :\; X_{V,W}\longrightarrow \Pj[W']
\;\; ;\;\; [v]\longmapsto [v+W]
$$ 
and let $Y_{V,W}$
be the $G$-invariant subvariety of $X_{V,W}^2$
\[Y_{V,W}:= \{(x,\,x')\in X_{V,W}^2\,|\,p(x)=p(x'),\, x\neq x'\}.\] 

\begin{lem}\label{lemyvw}
Same notations and assumptions 
as in Theorem \ref{thmeq}.\\
There is no $\mu$-stationary probability measure $\widetilde{\nu}$ on $Y_{V,W}$. 
\end{lem}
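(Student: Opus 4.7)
The plan is to identify $Y_{V,W}$ explicitly as a reductive $\overline{G}$-variety, where $\overline{G}=\rho(G)$ is the reductive quotient from \eqref{eqnrhoggl}, and then invoke the classification of stationary measures for Zariski dense random walks on reductive group actions \cite{Art}. Concretely, I would show that $Y_{V,W}$ is $G$-equivariantly homeomorphic to
\[Z\;:=\;\Pj[W\oplus W']\smallsetminus(\Pj[W]\cup\Pj[W']),\]
on which $G$ acts through $\rho$ via the block-diagonal action $(a,d)\cdot(w,w')=(aw,\,dw')$.

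To construct the identification, given $(x,x')\in Y_{V,W}$ I would pick lifts $v_1,v_2\in V$ of $x,x'$ having a common image $w'\in W'=V/W$; such lifts exist precisely because $p(x)=p(x')$. Then $w:=v_1-v_2\in W\smallsetminus\{0\}$ (since $x\neq x'$) and $w'\neq 0$ (since $x\in X_{V,W}$), and the simultaneous rescaling $v_i\mapsto\lambda v_i$ rescales $(w,w')$ by $\lambda$, so $[(w,w')]\in Z$ is well-defined. The inverse map lifts $w'\in W'$ arbitrarily to some $v_2\in V\smallsetminus W$ and sets $v_1:=v_2+w$. A direct computation with the block form \eqref{eqnagcgdg} gives $g\cdot(w,w')=(a_gw,\,d_gw')$, so the $G$-action on $Y_{V,W}$ factors through $\rho$. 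Consequently, a hypothetical $\mu$-stationary probability measure on $Y_{V,W}$ would yield a compactly supported $\overline{\mu}$-stationary probability measure $\overline{\nu}$ on $Z$, with $\overline{\mu}=\rho_*\mu$ Zariski dense in the reductive group $\overline{G}$.

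By the classification of \cite{Art}, any such $\overline{\nu}$ must be concentrated on the union of the compact $\overline{G}$-orbits of $Z$. But Lemma \ref{pasdorbitescompactesdansX} asserts that $Z$ has no compact $\overline{G}$-orbit, giving the desired contradiction. The main obstacle is invoking \cite{Art} in the form needed: the statement recalled in the introduction concerns a single homogeneous space $\overline{G}/H$, whereas $Z$ is non-homogeneous. One has to apply the homogeneous version to each orbit closure inside the support of $\overline{\nu}$, using Zariski density of $\overline{\mu}$ in $\overline{G}$ to guarantee $\overline{G}$-invariance of this support and reduce to the closed-orbit case.
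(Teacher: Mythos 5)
Your argument is essentially the paper's own proof: the same map $([w_1,w'],[w_2,w'])\mapsto[w_1-w_2,w']$ to $\Pj[W\oplus W']\smallsetminus(\Pj[W]\cup\Pj[W'])$, the same pushforward of the hypothetical stationary measure through $\rho$, and the same appeal to the classification of \cite{Art} combined with Lemma \ref{pasdorbitescompactesdansX}. One inaccuracy: this map is \emph{not} a $G$-equivariant homeomorphism. Its fiber over $[w,w']$ is the affine space of choices of the lift $v_2$ of $w'$ modulo nothing (i.e.\ a copy of $W$), as your own description of the ``inverse'' already betrays by requiring an arbitrary lift; in the simplest case $V=\R^2$, $W=\R$ the source is two-dimensional and the target one-dimensional. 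This does not damage the proof, since all you use is that the map is a $\rho$-equivariant continuous surjection, so the pushforward of a $\mu$-stationary probability measure is $\overline{\mu}$-stationary; and your final concern about applying \cite{Art} to a non-homogeneous space is handled by the form recalled in Proposition \ref{BQorbitescompactesmeasures}, which classifies stationary measures on all of $\Pj[W\oplus W']$ and hence, after extending $\overline{\nu}$ by the open inclusion, forces its support into compact orbits.
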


\begin{proof}
Suppose that such a measure $\widetilde{\nu}$ does exist.  
Consider again the natural projection 
$\rho : G\longrightarrow \mathrm{GL}(W)\times\mathrm{GL}(W')$
introduced in \eqref{eqnrhoggl}.
Let
$
\overline{G}:=\rho(G)
$
be the image of $G$ by $\rho$, a reductive subgroup
of $\mathrm{GL}(W)\times\mathrm{GL}(W')$,
and let
$
\overline{\mu}:=\rho_*\mu
$
be the image of $\mu$ by $\rho$, 
a Zariski dense probability measure on $\overline{G}$. 
Now consider the map
$$
f\; :\;
Y_{V,W}\longrightarrow \overline{Y}
\;\; ;\;\; 
( \lbrack w_1,\, w' \rbrack ,\lbrack w_2,\, w' \rbrack)
\longmapsto
\lbrack w_1-w_2,\, w' \rbrack ,
$$
where  $\overline{Y}:=\Pj[W\oplus W']\smallsetminus (\Pj[W]\cup\Pj[W'])$. 
Let $\overline{\nu}=f_*\widetilde{\nu}$ 
be the probability measure on $\overline{Y}$ 
that is the image of $\widetilde{\nu}$ by $\rho$. 
Since the map $f$ is equivariant, the probability measure 
$\overline{\nu}$ is $\overline{\mu}$-stationary. 
According to Proposition \ref{BQorbitescompactesmeasures} 
such a measure $\overline{\nu}$ is supported by a compact
$\overline{G}$-orbit in $\overline{Y}$. This contradicts the following 
Lemma \ref{pasdorbitescompactesdansX}.
\end{proof}

\begin{lem}\label{pasdorbitescompactesdansX} 
There are no compact $\overline{G}$-orbits in $\overline{Y}$.
\end{lem}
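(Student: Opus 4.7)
Suppose for contradiction that there exists a compact $\overline{G}$-orbit $O\subset\overline{Y}$. I will derive a contradiction by exhibiting $\gamma\in\overline{G}$ whose iterates push a point of $O$ arbitrarily close to $\mathbb{P}(W)$. Introduce the continuous, scaling-invariant function
\[
f\colon \overline{Y}\longrightarrow (0,\infty),\qquad f([w+w'])=\|w\|/\|w'\|,
\]
well-defined on $\overline{Y}=\mathbb{P}(W\oplus W')\smallsetminus(\mathbb{P}(W)\cup\mathbb{P}(W'))$ because both components of $v=w+w'\in W\oplus W'$ are nonzero. The function $f$ satisfies $f([v])\to 0$ (resp.\ $\infty$) as $[v]$ approaches $\mathbb{P}(W')$ (resp.\ $\mathbb{P}(W)$) in $\mathbb{P}(W\oplus W')$. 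Compactness of $O$ thus forces $\log f$ to be bounded on $O$, i.e.\ $f(O)\subset[1/M,M]$ for some $M>0$.

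\textbf{Finding a suitable $\gamma$.}
By Lemma \ref{lemhigdis}, the highest weights $\chi,\chi'$ of $\overline{G}$ on $W$ and $W'$ are distinct linear functionals on $\mathfrak{a}$, the Lie algebra of a maximal split torus $A\subset\overline{G}$. Pick $\lambda$ in the open positive Weyl chamber $\mathfrak{a}^{++}$ with $\chi(\lambda)\neq\chi'(\lambda)$: the hyperplane $\{\chi=\chi'\}$ is proper, so its complement in $\mathfrak{a}^{++}$ is nonempty and open. Set $\gamma=\exp(\lambda)\in A\subset\overline{G}$. Because $\lambda\in\mathfrak{a}^{++}$, the weight $\chi$ is the strict maximum among all weights of $W$ evaluated at $\lambda$; hence $\gamma$ acts proximally on $W$ with dominant eigenvalue $\alpha=e^{\chi(\lambda)}$ and simple eigenline $L_W^+\subset W$, and similarly proximally on $W'$ with $\beta=e^{\chi'(\lambda)}$ and $L_{W'}^+\subset W'$. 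In particular $|\alpha|\neq|\beta|$; after exchanging the roles of $W$ and $W'$ if necessary (which merely swaps $f$ with $1/f$), assume $|\alpha|>|\beta|$. \emph{This is the key step and the main obstacle: without Lemma \ref{lemhigdis} we could not guarantee that the dominant eigenvalues on $W$ and $W'$ are distinct.}

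\textbf{The contradiction.}
Fix $x_0=[v_0]\in O$ with $v_0=w_0+w_0'$, both components nonzero. By irreducibility of the action of $\overline{G}$ on $W$ (resp.\ on $W'$), the orbit $\overline{G}\cdot w_0$ (resp.\ $\overline{G}\cdot w_0'$) spans $W$ (resp.\ $W'$), so the sets
\[
U_W=\{h\in\overline{G}\mid h_Ww_0\text{ has nonzero component along }L_W^+\},\quad
U_{W'}=\{h\in\overline{G}\mid h_{W'}w_0'\text{ has nonzero component along }L_{W'}^+\}
\]
are each nonempty Zariski open subsets of $\overline{G}$; as $\overline{G}$ is Zariski irreducible, $U_W\cap U_{W'}\neq\emptyset$. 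Replacing $x_0$ by $h\cdot x_0\in O$ for some $h$ in this intersection, I may assume $w_0$ and $w_0'$ project nontrivially onto $L_W^+$ and $L_{W'}^+$ respectively. Proximality of $\gamma$ then yields $\|\gamma^nw_0\|\sim C|\alpha|^n$ and $\|\gamma^nw_0'\|\sim C'|\beta|^n$ with constants $C,C'>0$, so that
\[
f(\gamma^n\cdot x_0)=\frac{\|\gamma^nw_0\|}{\|\gamma^nw_0'\|}\sim\frac{C}{C'}\Bigl(\frac{|\alpha|}{|\beta|}\Bigr)^n\xrightarrow[n\to\infty]{}\infty,
\]
contradicting the boundedness of $f$ on the compact set $O$. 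Hence no compact $\overline{G}$-orbit exists in $\overline{Y}$.
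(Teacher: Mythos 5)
Your proof is correct, but it takes a genuinely different route from the paper's. The paper argues structurally: a compact $\overline{G}$-orbit in projective space has stabilizer containing a conjugate of $\overline{A}\,\overline{N}$, so the orbit meets the set of $\overline{A}\,\overline{N}$-fixed lines; by proximality and irreducibility the only $\overline{N}$-fixed lines of $W\oplus W'$ lie in the plane spanned by the two highest weight lines, and since $\chi\neq\chi'$ the only $\overline{A}$-fixed lines in that plane are those two lines themselves, which lie in $\mathbb{P}(W)\cup\mathbb{P}(W')$. You instead run a dynamical argument: the continuous function $f=\|w\|/\|w'\|$ must be bounded on a compact orbit in $\overline{Y}$, while a single regular element $\gamma=\exp\lambda$ with $\chi(\lambda)\neq\chi'(\lambda)$ (available exactly because of Lemma \ref{lemhigdis}) has distinct dominant eigenvalues on $W$ and $W'$, so its iterates blow $f$ up along the orbit once the base point is put in general position via the Zariski-connectedness/open-set trick. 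Both proofs consume the same two inputs, Lemma \ref{lemhigdis} and the proximality and irreducibility from $(H1)$; yours trades the classification of compact orbits of reductive groups for an explicit north--south dynamics computation plus a genericity argument, which is more elementary but slightly longer. One step worth making explicit: the inference ``$\chi$ is strictly dominant at $\lambda$, hence $\gamma$ acts proximally on $W$'' also uses that the highest restricted weight space is one-dimensional, which is precisely where proximality of the representation enters; as written, your ``hence'' silently absorbs this.
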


\begin{proof}
Such a compact  orbit would be of the form $\overline{G}/\overline{H}$, 
where $\overline{H}$ is an algebraic subgroup of $\overline{G}$ 
containing a conjugate of the group $\overline{A}\,\overline{N}$
with $\overline{A}$ a maximal split subtorus of $\overline{G}$
and $\overline{N}$ a maximal unipotent subgroup normalized by $\overline{A}$. 
Since $W$ and $W'$ are proximal irreducible representations of $\overline{G}$, 
there is only one $\overline{N}$-invariant line $\R v$ 
in $W$ and one $\R v'$ in $W'$.
Hence the $\overline{N}$-invariant lines 
in $W\oplus W'$ are included in the plane $\R v\oplus\R v'$. 
Since, by Lemma \ref{lemhigdis},
the highest weights $\chi$ and $\chi'$ of $W$ and $W'$
are distinct, the lines $\R v$ and $\R v'$
are the only $\overline{A}$-invariant lines in  $\R v \oplus \R v'$. 
Therefore, a compact $\overline{G}$-orbit in $\Pj[W\oplus W']$ 
is contained in $\Pj[W]\cup\Pj[W']$.
\end{proof}

\subsection{Proof of Uniqueness}
\bq
We can now show the uniqueness of 
the $\mu$-stationary probability measure $\nu$ on $X_{V,W}$. 
The same proof will tell us that its 
limit probability measures $\nu_b$
are Dirac measures. 
\eq

\begin{prop}
\label{unicitedirac}
Same notations and assumptions 
as in Theorem \ref{thmeq}.\\
If $\lun<\lpun$, the $\mu$-stationary probability measure 
$\nu$ on $X_{V,W}$ is unique.\\
Moreover, the limit measures $\nu_b$ are $\beta$-almost surely Dirac measures. 
\end{prop}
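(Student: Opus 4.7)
The plan is to take two $\mu$-stationary probability measures $\nu_1,\nu_2$ on $X_{V,W}$ and force them to coincide by forming their joining and pushing it forward through the equivariant projection $p:X_{V,W}\to\Pj[W']$, with the delicate step handled by Lemma \ref{lemyvw}.

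First, by Corollary \ref{corjoista} the joining $\nu_1\boxtimes\nu_2=\int_B\nu_{1,b}\otimes\nu_{2,b}\,\dd\beta(b)$ is $\mu$-stationary on $X_{V,W}^2$. Since by hypothesis $(H1)$ the group $G$ acts proximally and irreducibly on $W'$, the classical theory of random walks on projective spaces (recalled in Appendix \ref{seclimlaw}) gives a unique $\mu$-stationary probability measure $\nu'$ on $\Pj[W']$, whose limit measures $\nu'_b$ are $\beta$-almost surely Dirac masses $\delta_{z_b}$. Equivariance of $p$ yields $p_*\nu_1=p_*\nu_2=\nu'$, and commuting $p_*$ with the weak limit defining $\nu_{i,b}$ gives $p_*\nu_{i,b}=\delta_{z_b}$ for $\beta$-almost every $b$. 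Consequently each $\nu_{i,b}$ is supported on the fiber $p^{-1}(z_b)$, so $\nu_{1,b}\otimes\nu_{2,b}$ is concentrated on $\{(x,x')\in X_{V,W}^2\mid p(x)=p(x')\}=\Delta_{X_{V,W}}\sqcup Y_{V,W}$. Integrating over $b$ yields $(\nu_1\boxtimes\nu_2)(\Delta_{X_{V,W}}\cup Y_{V,W})=1$.

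Second comes the crucial step: if $(\nu_1\boxtimes\nu_2)(Y_{V,W})$ were positive, then since $Y_{V,W}$ is $G$-invariant the normalized restriction of $\nu_1\boxtimes\nu_2$ to $Y_{V,W}$ would be a $\mu$-stationary probability measure on $Y_{V,W}$, contradicting Lemma \ref{lemyvw}. Therefore $(\nu_1\boxtimes\nu_2)(\Delta_{X_{V,W}})=1$. Disintegrating, $(\nu_{1,b}\otimes\nu_{2,b})(\Delta_{X_{V,W}})=1$ for $\beta$-almost every $b$, and Lemma \ref{diagonalesansatome} forces $\nu_{1,b}$ and $\nu_{2,b}$ to agree and to be a common Dirac measure. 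Integrating over $b$ then yields $\nu_1=\nu_2$, and specializing to $\nu_1=\nu_2=\nu$ shows that $\nu_b$ is $\beta$-almost surely Dirac.

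The main obstacle is exactly this bridge step that rules out mass on $Y_{V,W}$: without it, two stationary measures on $X_{V,W}$ could in principle distribute mass among different points in the same fiber of $p$. The real depth of the uniqueness therefore sits inside Lemma \ref{lemyvw}, which itself rests on the classification of stationary measures on reductive homogeneous spaces (Proposition \ref{BQorbitescompactesmeasures}) together with the geometric fact that $\overline{Y}$ contains no compact $\overline{G}$-orbits.
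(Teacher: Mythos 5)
Your proposal is correct and follows essentially the same route as the paper: joining measure, pushforward to $\Pj[W']$ via the unique Furstenberg measure whose limit measures are Dirac, concentration on $\{p(x)=p(x')\}$, elimination of $Y_{V,W}$ by Lemma \ref{lemyvw}, and conclusion via Lemma \ref{diagonalesansatome}. Your explicit remark that positive mass on the $G$-invariant set $Y_{V,W}$ would yield a stationary probability there by normalized restriction is a small but welcome clarification of a step the paper leaves implicit.
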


\begin{proof}[Proof of Proposition \ref{unicitedirac}] 
Let $\nu_1$ and $\nu_2$ be two $\mu$-stationary probability  measures on $X_{V,W}$.
By Corollary \ref{corjoista} the joining measure $\nu_1\boxtimes\nu_2$ 
on $X_{V,W}^2$ is $\mu$-stationary.

Let us show that its support is contained in the subvariety
\[Z_{V,W}:= \{(x,\,x')\in X_{V,W}^2\,| \,p(x)=p(x')\},\]
where $p:X_{V,W}\rightarrow \Pj[W']$ is again the canonical projection. 
Since the action of $G$ on $W'$ is irreducible and proximal, 
there exists a unique $\mu$-stationary measure $\nu'_0$
on $\Pj[W']$ called the {\it Furstenberg measure}. Its limit probability 
measures $\nu'_{0,\,b}$ 
are $\beta$-almost surely Dirac measures
$\delta_{\xi_b}$
for some $\xi_b\in\Pj[W']$.
See \cite[Prop. 3.7]{Livre} for more detail on the Furstenberg measure.
Since $\nu'_0$ is unique,
we have the equalities
$$
p_*\nu_1=p_*\nu_2=\nu'_0 .
$$
Therefore, for $\beta$-almost every $b\in B$, we have
$$
p_*\nu_{1,b}=p_*\nu_{2,b}=\delta_{\xi_b},
$$ 
and hence 
$$
\nu_{1,b}\otimes \nu_{2,b}(Z_{V,W})=1.
$$
By the very definition \eqref{eqnjoista} of the joining measure, integrating this equality gives
$$
\nu_1\boxtimes\nu_2(Z_{V,W})=1.
$$
By definition, this set $Z_{V,W}$ is the union $Z_{V,W}=Y_{V,W}\cup \Delta_{X_{V,W}}$.
By Lemma \ref{lemyvw},
the $G$-variety $Y_{V,W}$ does not support $\mu$-stationary measures.
Therefore the  joining measure
$\nu_1\boxtimes\nu_2$
is supported on the diagonal $\Delta_{X_{V,W}}$.
Hence, for $\beta$ almost every $b$ in $B$, the measure
$\nu_{1,b}\otimes\nu_{2,b}$ is also supported on the diagonal:
$
\nu_{1,b}\otimes\nu_{2,b}(\Delta_{X_{V,W}})=1.
$
Therefore, by Lemma \ref{diagonalesansatome}, 
the limit probability measures $\nu_{1,b}$ and $\nu_{2,b}$ are 
both equal to the same Dirac measures. Hence, by Lemma \ref{nub}, one has $\nu_1=\nu_2$.
\end{proof}


\subsection{Limit of Means of Transition Probabilities}
\label{secconmea}

\bq
In this section we prove that the sequence of 
means of the transition probabilities $\mu^{*n}*\delta_x$ 
on $X_{V,W}$ always has a limit.
\eq

\begin{cor}\label{corthmeqlunlpun}
Same notations and assumptions 
as in Theorem. \ref{thmeq}. 
Let $x\in X_{V,W}$.\\
a) When $\lun\geq\lpun$,
one has the weak convergence
$\frac{1}{n}\sum_{j=1}^n \mu^{*j}* \delta_x\xrightarrow[\nti]{} 0.$\\
b) When $\lun<\lpun$,
one has the weak convergence
$\frac{1}{n}\sum_{j=1}^n \mu^{*j}* \delta_x\xrightarrow[\nti]{} \nu.$
\end{cor}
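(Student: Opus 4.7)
The plan is to extract vague accumulation points of the Ces\`aro averages $\nu_n:=\frac{1}{n}\sum_{j=1}^n \mu^{*j}*\delta_x$ via Banach--Alaoglu, and to identify them using the results proved in the previous sections.

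First I would check that any vague accumulation point $\nu_\infty$ of $(\nu_n)$ is automatically $\mu$-stationary. Since $\mu$ has compact support, convolution by $\mu$ is vaguely continuous on positive Radon measures, and the telescoping identity
\[
\mu*\nu_n - \nu_n \;=\; \tfrac{1}{n}\bigl(\mu^{*(n+1)}*\delta_x - \mu*\delta_x\bigr)
\]
shows that $\mu*\nu_n-\nu_n$ has total variation $O(1/n)$. Passing to the vague limit along a convergent subsequence then yields $\mu*\nu_\infty=\nu_\infty$.

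For part a), where $\lun\geq\lpun$, each vague limit $\nu_\infty$ is a positive Radon measure of total mass at most $1$. If $\nu_\infty$ were nonzero, its normalization would be a $\mu$-stationary probability measure on $X_{V,W}$, contradicting Proposition \ref{l1lp1nonrecloiprop}. Hence every vague accumulation point of $(\nu_n)$ vanishes, so $\nu_n\to 0$ vaguely, which is the claimed weak convergence. For part b), where $\lun<\lpun$, Corollary \ref{l1lp1urloi} provides uniform recurrence in law: for each $\varepsilon>0$ there exist a compact $C\subset X_{V,W}$ and $n_0\in\N^*$ with $\mu^{*j}*\delta_x(C)\geq 1-\varepsilon$ for all $j\geq n_0$. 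Ces\`aro averaging then gives $\nu_n(C)\geq 1-\varepsilon-n_0/n$, showing $(\nu_n)$ is tight. Every vague accumulation point is therefore a probability measure, which is $\mu$-stationary by the first step and must equal $\nu$ by the uniqueness half of Proposition \ref{unicitedirac}. Tightness upgrades vague convergence to weak convergence of the full sequence.

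The main difficulty is essentially nothing new here, since the substantial work has already been done in the preceding sections; the only points demanding minor care are ensuring the normalization argument in (a) accommodates possibly sub-probability vague limits, and invoking tightness in (b) to bridge vague and weak convergence against bounded continuous test functions.
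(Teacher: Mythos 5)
Your proposal is correct and follows essentially the same route as the paper: extract accumulation points of the Ces\`aro averages, observe they are $\mu$-stationary finite measures, rule them out via Proposition \ref{l1lp1nonrecloiprop} in case (a), and use uniform recurrence (no mass loss) plus uniqueness from Proposition \ref{unicitedirac} in case (b). The paper's proof is merely terser, leaving the telescoping and tightness details implicit.
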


\begin{proof}[Proof of Corollary \ref{corthmeqlunlpun}]
Every accumulation point
of the sequence of probability measures
$\frac{1}{n}\sum_{j=1}^n \mu^{*j}* \delta_x$
is a $\mu$-stationary  finite measure. 

When $\lun\geq\lpun$, by Proposition \ref{l1lp1nonrecloiprop},
such a measure is necessarily $0$. 

When $\lun<\lpun$, by Corollary \ref{l1lp1urloi},
the corresponding Markov chain is recurrent in law; 
hence, no mass is lost
and the accumulation points are thus $\mu$-stationary probability measures.
By Proposition \ref{unicitedirac}, there is only one such measure. 
\end{proof}


\appendix

\section{Limit Laws on Reductive Groups}
\label{seclimlaw}

\bq
In this appendix, we recall some facts about random walks on reductive groups, 
which are mainly detailed in \cite{Livre}. 
\eq

\subsection{Cartan Decomposition}

Let $G$ be a Zariski connected real algebraic reductive group. 
Let $A$ be a maximal split subtorus of $G$,
$\Lie[a]$ be the Lie algebra  of $A$,
$\Lie[a]_+\subset\Lie[a]$ be a Weyl chamber
and $A_+=\exp \Lie[a]_+$.
There exists a maximal compact subgroup $K$ of $G$ 
such that $G$ has a \emph{Cartan decomposition}
$G=KA_+ K.$
The \emph{Cartan projection} of $G$ is the unique map 
$\kappa : G \rightarrow \Lie[a]_+$
such that, for all $g\in G$,
\[g\in K \text{exp}(\kappa(g)) K.\]

The Cartan projection is useful because of the following Lemma.

\begin{lem}\label{representationgeometriquecartaniwas}
(\cite[Lem. 6.33]{Livre}) 
Let $G$ be a Zariski connected real algebraic reductive group,
$\rho$ be an irreducible algebraic representation of $G$ 
in a real vector space $W$ and $\chi\in \Lie[a]^*$
be the \emph{highest weight of $W$}.
There exists a norm  on $W$  such that, 
for all $g\in G$, 
\begin{equation}
\label{eqnchikap}
\chi (\kappa(g))=\log (\n[\rho(g)]).
\end{equation}
\end{lem}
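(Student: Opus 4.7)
The plan is to construct a norm on $W$ for which, after applying the Cartan decomposition, only the highest-weight direction contributes to the operator norm. The argument splits into three steps.

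First, I would produce the norm by complexification. Fix a Cartan involution $\theta$ of $G$ whose fixed-point subgroup is $K$, and let $U$ be a maximal compact subgroup of the complexification $G_\C$ containing $K$ and satisfying $\Lie[u]=\Lie[k]\oplus i\Lie[p]$. The representation $\rho$ extends to $G_\C$ on $W_\C$. Averaging any positive-definite Hermitian form on $W_\C$ over $U$ (for its Haar measure) yields a $U$-invariant Hermitian inner product, and restricting its real part to the real subspace $W$ produces a $K$-invariant Euclidean inner product. Let $\n[\cdot]$ denote the associated vector and operator norms.

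Second, I would establish two properties of this norm. From $K$-invariance and the Cartan decomposition $g=k_1\exp(\kappa(g))k_2$ we immediately obtain
$$\n[\rho(g)]\; =\; \n[\rho(\exp(\kappa(g)))].$$
Moreover, for every $H\in\Lie[a]$ the element $\exp(iH)$ lies in $U$, so $\rho(\exp(iH))$ is unitary; hence $\rho(\exp(H))$ is self-adjoint and positive, and its eigenspaces --- the $A$-weight subspaces of $W$ --- are mutually orthogonal.

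Third, I would conclude. For $H\in\Lie[a]_+$, the orthogonal decomposition of $W$ into $A$-weight spaces $W_{\chi'}$, on which $\rho(\exp(H))$ acts by the scalar $e^{\chi'(H)}$, gives
$$\n[\rho(\exp(H))]\; =\; \max_{\chi'}e^{\chi'(H)},$$
where $\chi'$ ranges over the weights of $A$ appearing in $W$. The defining property of the highest weight $\chi$ of the irreducible representation $W$ is that every such $\chi'$ satisfies $\chi-\chi'=\sum_i n_i\alpha_i$ with $n_i\in\N$ and $\alpha_i$ the simple positive roots; since each $\alpha_i$ is non-negative on $\Lie[a]_+$, we get $\chi'(H)\leq\chi(H)$, so the maximum equals $e^{\chi(H)}$. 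Applied to $H=\kappa(g)$ and combined with the first display, this yields $\chi(\kappa(g))=\log\n[\rho(g)]$. The one delicate point is producing a single norm that is simultaneously $K$-invariant and orthogonalizes the $A$-weight decomposition --- everything else reduces to the Cartan decomposition and standard highest-weight theory.
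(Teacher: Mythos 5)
Your proof is correct and is essentially the standard argument behind the cited result: the paper itself gives no proof of this lemma beyond the reference to \cite[Lem. 6.33]{Livre}, and that reference proceeds exactly as you do, by constructing a $(K,A)$-good norm (Euclidean, $K$-invariant, with $\rho(\exp \mathfrak{a})$ self-adjoint) via a $U$-invariant Hermitian metric on the complexification, and then reading off the operator norm from the Cartan decomposition and the dominance of the highest restricted weight on the closed Weyl chamber. The one point you flag as delicate --- a single norm that is simultaneously $K$-invariant and orthogonalizes the restricted weight spaces --- is indeed the content of the ``good norm'' construction, and your compact-real-form argument handles it correctly.
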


\subsection{Limit Laws}

Let $\mu$ be  a Zariski dense probability measure with compact support
on $G$.
Let $B=G^{\N^*}$ and $\beta=\mu^{\otimes\N^*}$. 
We now recall two limit laws for the Cartan projection.

Define the \emph{Lyapunov vector} (see \cite[Thm 10.9]{Livre})
$\sigma_\mu\in \Lie[a]_+$ as the limit 
\begin{equation}
\label{eqnsimu}
\sigma_{\mu}:=\lim_{n\rightarrow\infty}
\frac{1}{n}\int_G\kappa(g)\dd\mu^{*n}(g).
\end{equation}

\begin{thm}\label{thmLGN}(Law of Large Numbers, \cite[Thm 10.9]{Livre})
Let $G$ be a Zariski connected real reductive group 
and $\mu$ a Zariski dense probability measure with compact support on $G$. 
For $\beta$-almost every $b\in B$, 
we have the convergence
\[\tfrac{1}{n}\,\kappa(b_n\cdots b_1)\xrightarrow[\nti]{}\sm.\]
\end{thm}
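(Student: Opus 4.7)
The plan is to reduce the vector-valued convergence to a family of scalar convergences, and to handle each scalar statement by Kingman's subadditive ergodic theorem applied to the logarithm of an operator norm.

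First I would choose a finite family $\rho_1,\ldots,\rho_N$ of irreducible algebraic representations of $G$ whose highest weights $\chi_1,\ldots,\chi_N\in \Lie[a]^*$ form a basis of $\Lie[a]^*$. Such a family exists because $G$ is Zariski connected and reductive: the highest weights of the irreducible representations of the semisimple part span its dual, and one completes with characters of the center (if any). For each $i$, Lemma \ref{representationgeometriquecartaniwas} produces a norm on the representation space $W_i$ such that $\chi_i(\kappa(g))=\log\|\rho_i(g)\|$ for every $g\in G$. Once one proves that for each $i$, for $\beta$-almost every $b$,
\[
\tfrac{1}{n}\chi_i(\kappa(b_n\cdots b_1))\xrightarrow[\nti]{}\chi_i(\sigma_\mu),
\]
the conclusion follows by linear independence of the $\chi_i$'s, since $\frac{1}{n}\kappa(b_n\cdots b_1)$ then converges to $\sigma_\mu$ in $\Lie[a]$.

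Next I would fix $i$, drop the index and write $\rho=\rho_i$, $\chi=\chi_i$, and set $F_n(b):=\log\|\rho(b_n\cdots b_1)\|$. Submultiplicativity of the operator norm gives, for all $m,n\geq 1$ and every $b\in B$,
\[
F_{n+m}(b)\;\leq\;F_m(b)+F_n(T^m b),
\]
where $T:B\to B$ is the shift $(Tb)_k=b_{k+1}$. Since $\beta=\mu^{\otimes\N^*}$, the shift $T$ is $\beta$-preserving and $\beta$-ergodic, and the compact support of $\mu$ ensures $F_1\in\Lun(\beta)$. Kingman's subadditive ergodic theorem therefore yields almost sure convergence of $\frac{1}{n}F_n(b)$ to the deterministic constant $\inf_{n\geq 1}\frac{1}{n}\int F_n\,\dd\beta=\lim_n \frac{1}{n}\int F_n\,\dd\beta$. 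By Fubini and the law of the distribution of $b_n\cdots b_1$ under $\beta$, this integral equals $\int_G\log\|\rho(g)\|\,\dd\mu^{*n}(g)=\int_G\chi(\kappa(g))\,\dd\mu^{*n}(g)$, and by definition \eqref{eqnsimu} of $\sigma_\mu$ its $\frac{1}{n}$-limit is exactly $\chi(\sigma_\mu)$.

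Combining the scalar statements for $\chi_1,\ldots,\chi_N$ gives the vector-valued statement of the theorem. The main obstacle I anticipate is not the probabilistic step, which is essentially a direct application of Kingman, but the algebraic preliminary: one must know that enough irreducible algebraic representations exist so that their highest weights span $\Lie[a]^*$, and that for each of them Lemma \ref{representationgeometriquecartaniwas} can indeed be invoked. This uses precisely the assumption that $G$ is Zariski connected real algebraic reductive, so that the highest weight theory on $\Lie[a]$ is available and the fundamental representations (together with the characters of the connected center) provide the required basis of $\Lie[a]^*$.
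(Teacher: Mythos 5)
Your argument is correct, but it is worth noting that the paper does not prove this statement at all: Theorem \ref{thmLGN} is quoted verbatim from \cite[Thm 10.9]{Livre}, so the ``paper's proof'' is a citation. Your route is the classical Furstenberg--Kesten argument: pick irreducible representations $\rho_1,\dots,\rho_N$ whose highest weights form a basis of $\Lie[a]^*$, use the good norms of Lemma \ref{representationgeometriquecartaniwas} to convert $\chi_i(\kappa(\cdot))$ into $\log\|\rho_i(\cdot)\|$, and apply Kingman to the subadditive sequence $F_n(b)=\log\|\rho_i(b_n\cdots b_1)\|$ (your ordering is the right one: $b_{n+m}\cdots b_1=(b_{n+m}\cdots b_{m+1})(b_m\cdots b_1)$ gives $F_{n+m}\leq F_m+F_n\circ T^m$, and ergodicity of the Bernoulli shift plus compact support of $\mu$ close the probabilistic step; Fekete's lemma even gives you existence of the limit \eqref{eqnsimu} for free, since the $\chi_i$ span). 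The two points you flag as potential obstacles are indeed the only ones that need a reference rather than an argument: the existence of finitely many irreducible algebraic representations whose highest restricted weights form a basis of $\Lie[a]^*$ is established in \cite{Livre} (it is exactly the device used there to transport norm estimates to the Cartan projection), and Lemma \ref{representationgeometriquecartaniwas} applies to each of them. By contrast, the proof of \cite[Thm 10.9]{Livre} goes through the Iwasawa cocycle on the flag variety, the uniqueness of the stationary measure there, and a law of large numbers for cocycles; that machinery yields strictly more (the LLN for the Iwasawa cocycle itself, regularity properties of $\sm$, and the groundwork for the central limit theorem and the law of the iterated logarithm \ref{thmLIL}), whereas your Kingman argument is more elementary and self-contained but delivers only the Cartan-projection convergence stated here --- which is all Theorem \ref{thmLGN} asks for.
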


Define the \emph{covariance 2-tensor} (see \cite[Prop.14.18]{Livre})
$\Phi_{\mu}\in S^2\Lie[a]$ 
as the limit
\begin{equation}
\label{eqnphimu}
\Phi_{\mu}:= \lim_{n\rightarrow\infty}\frac{1}{n}\int_G (\kappa(g)-n\sigma_{\mu})^{\otimes 2} \dd\mu^{*n}(g).
\end{equation}
$\Phi_{\mu}$ is a symmetric $2$-tensor on $\Lie[a]$.
We denote by $\Lie[a]_{\mu}\subset\Lie[a]$ the 
\emph{linear span of $\Phi_{\mu}$} which  
is the smallest subspace $\Lie[a]_{\mu}$ of $\Lie[a]$
such that we have $\Phi_{\mu}\in S^2\Lie[a]_{\mu}$. 
We can see $\Phi_{\mu}$ as an inner product over $\Lie[a]_{\mu}$.  
We then denote by  $K_{\mu}$ the closed unit ball of $\Lie[a]_{\mu}$
for the metric corresponding to $\Phi_{\mu}$. 
When $G$ is semisimple, the covariance $2$-tensor $\Phi_{\mu}$ is 
\emph{non degenerate} 
i.e. one has $\Lie[a]_{\mu}=\Lie[a]$.

\begin{thm}\label{thmLIL}
(Law of the Iterated Logarithm, \cite[Thm 13.17]{Livre})
Let $G$ be a Zariski connected real reductive  group and
$\mu$ a Zariski dense probability measure with compact support on $G$. 
Then, for $\beta$-almost every $b\in B$, 
the set of accumulation points of  the sequence
\[(\frac{\kappa(b_n\cdots b_1)-n\sigma_{\mu}}{\sqrt{2n\log\log n}})\sui
\;\;\; \mbox{\it is exactly $K_{\mu}$.}\]
\end{thm}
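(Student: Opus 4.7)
The plan is to deduce the Law of the Iterated Logarithm for the Cartan projection from a classical LIL for sums of weakly dependent $\mathfrak{a}$-valued random variables, using the Iwasawa cocycle on the flag variety as a bridge. The reason one cannot attack $\kappa$ head-on is that the Cartan projection is not additive: $\kappa(gh)\neq \kappa(g)+\kappa(h)$, so the sequence $\kappa(b_n\cdots b_1)$ is not a random walk and no classical limit theorem applies directly.

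First, I would introduce the Iwasawa cocycle $\sigma:G\times \mathcal{P}\to \mathfrak{a}$, where $\mathcal{P}=G/P$ is the full flag variety of $G$ associated with a minimal parabolic $P$ adapted to $(A,\mathfrak{a}_+)$. The key geometric input is that, along a $\beta$-almost every trajectory, the difference
\[
\kappa(b_n\cdots b_1)-\sigma(b_n\cdots b_1,\eta)
\]
stays bounded for generic $\eta$ (with respect to the Furstenberg measure $\nu_\mu$ on $\mathcal{P}$). This is a consequence of the contraction of the $\mu$-random walk on $\mathcal{P}$, which in turn follows from Zariski density of $\mu$ together with the proximality of the $G$-action on each irreducible factor of a faithful representation.

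Second, I would exploit the additive cocycle relation $\sigma(gh,\eta)=\sigma(g,h\eta)+\sigma(h,\eta)$ to write, with $\eta_{k-1}:=b_{k-1}\cdots b_1\eta_0$,
\[
\sigma(b_n\cdots b_1,\eta_0)\;=\;\sum_{k=1}^n \sigma(b_k,\eta_{k-1}),
\]
a Birkhoff sum along the Markov chain $(\eta_k)$ on $\mathcal{P}$. Zariski density and proximality yield a spectral gap for the averaging operator $P_\mu$ acting on a suitable H\"older function space on $\mathcal{P}$ (Le Page's theorem), which allows Gordin's method to produce a H\"older solution $\psi$ of the Poisson equation $\psi-P_\mu\psi=\sigma(\cdot,\cdot)-\sigma_\mu$. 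Subtracting $n\sigma_\mu$, the sum decomposes as a stationary $\mathfrak{a}$-valued martingale with bounded increments (since $\mathrm{Supp}(\mu)$ is compact) plus a bounded telescoping coboundary, and a direct calculation identifies the asymptotic covariance tensor of this martingale with exactly $\Phi_\mu$.

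The main obstacle is the last step: upgrading this martingale approximation to an LIL with the \emph{correct} cluster set $K_\mu$. I would apply a vector-valued martingale LIL (Stout, or Heyde--Scott) to the $\mathfrak{a}$-valued martingale above. Bounded increments are immediate, $L^2$-convergence of conditional covariances follows from ergodicity of $(\eta_k)$ under $\nu_\mu$, and the conclusion describes the $\beta$-almost sure set of accumulation points of the normalized partial sums as the unit ball of $\mathfrak{a}_\mu$ for the quadratic form $\Phi_\mu$, that is, exactly $K_\mu$. The delicate point is that $K_\mu$ must be \emph{attained}, not merely contain the cluster set; this is automatic because $\mathfrak{a}_\mu$ is defined as the linear span of $\Phi_\mu$, so the covariance is non-degenerate on $\mathfrak{a}_\mu$. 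Finally, since both the coboundary $\psi(\eta_n)-\psi(\eta_0)$ and the Iwasawa--Cartan difference from the first step are bounded while $\sqrt{2n\log\log n}\to\infty$, they contribute nothing after normalization, and the same cluster set $K_\mu$ is obtained for $(\kappa(b_n\cdots b_1)-n\sigma_\mu)/\sqrt{2n\log\log n}$, as required.
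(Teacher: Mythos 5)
You are proving a statement that the paper itself does not prove: Theorem \ref{thmLIL} is quoted as a black box from \cite[Thm 13.17]{Livre}, so there is no internal argument to compare yours against. What you have written is, in effect, a reconstruction of the proof in that reference, and as an outline it is faithful to it: the passage from the non-additive Cartan projection to the additive Iwasawa cocycle via the almost sure boundedness of $\kappa(b_n\cdots b_1)-\sigma(b_n\cdots b_1,\eta)$ for generic $\eta$, the Birkhoff-sum decomposition along the Markov chain on the flag variety, Le Page's spectral gap feeding Gordin's martingale--coboundary decomposition, and a vector-valued martingale LIL identifying the cluster set with the unit ball $K_{\mu}$ of $\Phi_{\mu}$ on its linear span $\mathfrak{a}_{\mu}$ --- this is exactly the architecture of the cited chapters of \cite{Livre}. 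Two points in your sketch are thinner than the arguments they stand in for. First, the Poisson equation is solved for the averaged drift $\eta\mapsto\int_G\sigma(g,\eta)\,d\mu(g)-\sigma_{\mu}$, a H\"older function on the flag variety alone; writing $\psi-P_{\mu}\psi=\sigma(\cdot,\cdot)-\sigma_{\mu}$ for the two-variable cocycle conflates the martingale-increment step with the coboundary step. Second, the assertion that the cluster set is \emph{exactly} $K_{\mu}$ is not ``automatic from non-degeneracy on $\mathfrak{a}_{\mu}$'': both the inclusion of the cluster set in $K_{\mu}$ and the fact that every point of $K_{\mu}$ is attained require an actual Strassen/Heyde--Scott type LIL for bounded-increment stationary martingales with convergent conditional covariances, and that theorem must be invoked explicitly rather than gestured at. With those two repairs your outline is a correct account of the proof of the cited result; as the paper stands, though, the intended ``proof'' of this theorem is simply the citation.
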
 

\subsection{Opposition Involution}

In Chapter \ref{l1lp1nrecloi} we need a variation of Theorems 
\ref{thmLGN} and \ref{thmLIL} where the order in the product of $b_i$'s is inverted.

\begin{cor}\label{corinv}
Let $G$ be a Zariski connected real reductive  group
and $\mu$ a Zariski dense probability measure with compact support on $G$.\\
a) For $\beta$-almost every $b\in B$, 
we have the convergence
\[\tfrac{1}{n}\,\kappa(b_1\cdots b_n)\xrightarrow[\nti]{}\sm.\]
b) For $\beta$-almost every $b\in B$, 
the set of accumulation points of the sequence
\[(\frac{\kappa(b_1\cdots b_n)-n\sigma_{\mu}}{\sqrt{2n\log\log n}})\sui
\;\;\; \mbox{\it is exactly $K_{\mu}$.}\]
\end{cor}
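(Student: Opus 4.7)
The plan is to reduce Corollary~\ref{corinv} to Theorems~\ref{thmLGN} and~\ref{thmLIL} via the \emph{opposition involution}. Recall that there is an involution $\iota$ of $\Lie[a]$ preserving $\Lie[a]_+$ (explicitly $\iota=-w_0$, where $w_0$ is the longest element of the Weyl group of $(G,A)$) which satisfies $\kappa(g^{-1})=\iota(\kappa(g))$ for every $g\in G$. The crucial identity is then
\[
\kappa(b_1\cdots b_n)=\iota\bigl(\kappa(b_n^{-1}\cdots b_1^{-1})\bigr),
\]
which rewrites our product in the ``wrong'' order as one of the form covered by the cited theorems, provided we work with the inverted increments.

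Let $\check{\mu}:=(g\mapsto g^{-1})_*\mu$, still Zariski dense and compactly supported on $G$. The measurable bijection $b=(b_n)\sui\mapsto(b_n^{-1})\sui$ of $B$ pushes $\beta$ forward to $\check{\beta}:=\check{\mu}^{\otimes\N^*}$. Applying Theorem~\ref{thmLGN} to $\check\mu$ and then the continuous map $\iota$ gives, for $\beta$-almost every $b$,
\[
\tfrac{1}{n}\,\kappa(b_1\cdots b_n)=\iota\!\left(\tfrac{1}{n}\,\kappa(b_n^{-1}\cdots b_1^{-1})\right)\xrightarrow[\nti]{}\iota(\sigma_{\check{\mu}});
\]
similarly Theorem~\ref{thmLIL} applied to $\check{\mu}$ identifies the set of accumulation points of the normalized sequence in part b) as $\iota(K_{\check{\mu}})$.

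It then remains to check that $\iota(\sigma_{\check{\mu}})=\sigma_\mu$ and $\iota(K_{\check{\mu}})=K_\mu$. The change of variable $g\mapsto g^{-1}$ in~\eqref{eqnsimu}, combined with $\kappa(g^{-1})=\iota(\kappa(g))$, yields $\sigma_{\check{\mu}}=\iota(\sigma_\mu)$, so the first identity follows from $\iota^2=\mathrm{id}$. The same substitution in~\eqref{eqnphimu} gives $\Phi_{\check{\mu}}=(\iota\otimes\iota)_*\Phi_\mu$, whence $\Lie[a]_{\check{\mu}}=\iota(\Lie[a]_\mu)$ and $K_{\check{\mu}}=\iota(K_\mu)$; a second application of $\iota$ delivers $\iota(K_{\check{\mu}})=K_\mu$. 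The only possibly delicate ingredient is the structural identity $\kappa(g^{-1})=\iota(\kappa(g))$, which is a standard property of the Cartan decomposition (it follows from the fact that a representative of $w_0$ in $K$ conjugates $A_+$ to $A_+^{-1}$) and can be quoted directly from~\cite{Livre}.
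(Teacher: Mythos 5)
Your proposal is correct and follows essentially the same route as the paper: apply Theorems~\ref{thmLGN} and~\ref{thmLIL} to $\check{\mu}$, transport the result through the opposition involution via $\kappa(g^{-1})=\iota(\kappa(g))$, and identify $\iota(\sigma_{\check{\mu}})=\sigma_\mu$ and $\iota(K_{\check{\mu}})=K_\mu$ from the defining formulas \eqref{eqnsimu} and \eqref{eqnphimu} (the paper isolates this last step as Lemma~\ref{iotasigmamu}).
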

 
The proof will use the probability measure $\check{\mu}$ on $G$ 
which is the image of $\mu$ by 
the map $g\mapsto g^{-1}$.
Recall that there exists a linear map 
$\iota:\Lie[a]\rightarrow\Lie[a]$ 
called the \emph{opposition involution} (see \cite[\S 8.2]{Livre})
such that, for all $g\in G$, we have 
\begin{equation}
\label{eqnkapg}
\kappa(g^{-1})=\iota(\kappa(g)).
\end{equation}

\begin{lem}\label{iotasigmamu}
The Lyapunov vector $\sigma_{\check{\mu}}$ of $\check{\mu}$, 
its covariance $2$-tensor $\Phi_{\check{\mu}}$
and the closed unit ball $K_{\check{\mu}}$  of 
the linear span $\Lie[a]_{\check{\mu}}$ of $\Phi_{\check{\mu}}$ are equal to:
\begin{equation*}
\sigma_{\check{\mu}}\; =\; \iota(\sm)
\;\; ,\;\;
\Phi_{\check{\mu}} \; =\; \iota(\Phi_{\mu}) 
\;\; ,\;\;
K_{\check{\mu}}\; =\;\iota(K_{\mu}).
\end{equation*}
\end{lem}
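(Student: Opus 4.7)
The plan is to derive all three equalities from the two defining formulas \eqref{eqnsimu} and \eqref{eqnphimu}, combined with the compatibility $\kappa(g^{-1})=\iota(\kappa(g))$ and the elementary fact that convolution commutes with the inversion map.

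First I would establish the identity $\check{\mu}^{*n}=\widecheck{\mu^{*n}}$ for every $n\geq 1$. The quickest way is probabilistic: if $b_1,\dots,b_n$ are i.i.d.\ with law $\mu$, then $b_1^{-1},\dots,b_n^{-1}$ are i.i.d.\ with law $\check{\mu}$, and
\[b_1^{-1}\cdots b_n^{-1}=(b_n\cdots b_1)^{-1},\]
which has the same law as $(b_1\cdots b_n)^{-1}$ by exchangeability, i.e.\ the image of $\mu^{*n}$ under $g\mapsto g^{-1}$. Combining this with \eqref{eqnkapg} gives the crucial change-of-variable formula
\[\int_G\kappa(g)\,\mathrm d\check{\mu}^{*n}(g)=\int_G\iota(\kappa(g))\,\mathrm d\mu^{*n}(g),\]
and the analogous identity with any continuous function in place of $\kappa$.

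Next I would handle the three assertions in order. For the Lyapunov vector, applying the displayed formula together with the linearity of $\iota$ and the definition \eqref{eqnsimu} yields
\[\sigma_{\check{\mu}}=\lim_{\nti}\tfrac{1}{n}\int_G\kappa(g)\,\mathrm d\check{\mu}^{*n}(g)=\iota\!\left(\lim_{\nti}\tfrac{1}{n}\int_G\kappa(g)\,\mathrm d\mu^{*n}(g)\right)=\iota(\sigma_{\mu}).\]
For the covariance tensor, I would substitute $n\sigma_{\check{\mu}}=\iota(n\sigma_{\mu})$ and use linearity of $\iota$ inside the parentheses:
\[(\kappa(g^{-1})-n\sigma_{\check{\mu}})^{\otimes 2}=\bigl(\iota(\kappa(g)-n\sigma_{\mu})\bigr)^{\otimes 2}=(\iota\otimes\iota)\bigl((\kappa(g)-n\sigma_{\mu})^{\otimes 2}\bigr),\]
and integrating, passing to the limit and invoking \eqref{eqnphimu} gives $\Phi_{\check{\mu}}=(\iota\otimes\iota)(\Phi_{\mu})$, which is what the notation $\iota(\Phi_{\mu})$ means.

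Finally, for the unit ball I would argue that $\iota$ is a linear automorphism of $\Lie[a]$, so it maps the minimal subspace carrying $\Phi_{\mu}$ bijectively onto the minimal subspace carrying $(\iota\otimes\iota)(\Phi_{\mu})=\Phi_{\check{\mu}}$; hence $\Lie[a]_{\check{\mu}}=\iota(\Lie[a]_{\mu})$. Moreover the relation $\Phi_{\check{\mu}}(\iota(x),\iota(y))=\Phi_{\mu}(x,y)$ for $x,y\in\Lie[a]_{\mu}$ shows that $\iota$ is an isometry between the two Euclidean structures, and therefore sends closed unit balls to closed unit balls, giving $K_{\check{\mu}}=\iota(K_{\mu})$. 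There is no real obstacle here; the only point that requires a moment of care is the harmless but essential exchangeability argument used to identify $\check{\mu}^{*n}$ with the inverse image of $\mu^{*n}$, and the convention that $\iota$ acts on $S^{2}\Lie[a]$ via $\iota\otimes\iota$.
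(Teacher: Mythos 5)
Your proposal is correct and follows exactly the route the paper intends: its proof of Lemma \ref{iotasigmamu} is the one-line remark that the identities follow from \eqref{eqnsimu}, \eqref{eqnphimu} and \eqref{eqnkapg}, and you have simply supplied the implicit details (the identification of $\check{\mu}^{*n}$ with the image of $\mu^{*n}$ under inversion, and the equivariance of the limits under the linear involution $\iota$). Nothing is missing.
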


\begin{proof} These identities follow from
\eqref{eqnsimu}, \eqref{eqnphimu} and \eqref{eqnkapg}.
\end{proof}

\begin{proof}[Proof of Corollary \ref{corinv}]
We apply the Law of Large Numbers \ref{thmLGN} 
to the probability measure $\check{\mu}$. 
For $\beta$-almost every $b\in B$, 
the sequence $\frac{1}{n}\kappa(b_n^{-1}\cdots b_1^{-1})$ converges to 
$\sigma_{\check{\mu}}$. 
Applying the opposition involution $\iota$, we find that,
for $\beta$-almost every $b\in B$, 
the sequence $\frac{1}{n}\kappa(b_1\cdots b_n)$ converges to
$\iota(\sigma_{\check{\mu}})$ which is equal to $\sm$ by Lemma \ref{iotasigmamu}. 

In the same manner,  Theorem \ref{thmLIL} tells us
that the set of accumulation points of the sequence 
$\frac{\kappa(b_1\cdots b_n)-n\sigma_{\mu}}{\sqrt{2n\log\log n}}$
is exactly $\iota(K_{\check{\mu}})$ which is equal to 
$K_{\mu}$ by Lemma \ref{iotasigmamu}. 
\end{proof}

\subsection{Stationary Measures on Projective Spaces}

In Chapter \ref{secunista}, we use the 
classification  of stationary measures in \cite[Thm 1.7]{Art}:

\begin{prop}\label{BQorbitescompactesmeasures}
Let $V$ be a real vector space, $G\subset \Gl$ be a reductive algebraic 
subgroup of $\Gl$ and
$\mu$ be a Zariski dense probability measure on $G$. 

Then every $\mu$-stationary probability measure $\nu$  on $\Pj[V]$ 
is supported by a compact $G$-orbit.

Conversely every compact $G$-orbit in $\Pj[V]$ supports a unique 
$\mu$-stationary probability measure $\nu$.
\end{prop}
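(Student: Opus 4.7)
This statement has two directions: the forward direction asserting that every $\mu$-stationary probability measure is supported on a compact $G$-orbit, and the converse asserting existence and uniqueness of a $\mu$-stationary measure on each compact $G$-orbit. The plan is to dispose of the converse first, which is essentially formal, and then outline the main ideas for the forward direction, which is the substantive part.

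For the converse, fix a compact $G$-orbit $\mathcal{O}\simeq G/H$. Existence follows from a Krylov--Bogolyubov argument: for any $x\in\mathcal{O}$, the Ces\`aro averages $\frac{1}{n}\sum_{j=1}^n P_\mu^j\delta_x$ lie in the weak-$*$ compact simplex of probability measures on $\mathcal{O}$, and any limit point is $\mu$-stationary. For uniqueness, I would show that any $\mu$-stationary $\nu$ on $\mathcal{O}$ is already $G$-invariant: the set $\{g\in G\mid g_*\nu=\nu\}$ is a Zariski-closed subgroup containing $\Supp\mu$, so by Zariski density of $\mu$ in $G$ it equals $G$. Since $\mathcal{O}\simeq G/H$ is a compact homogeneous space, it carries a unique $G$-invariant probability measure.

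For the forward direction, my plan uses Furstenberg's limit-measure construction. By Lemma \ref{nub}, the stationary measure decomposes as $\nu=\int\nu_b\,d\beta(b)$, and it suffices to show that $\nu_b$ is $\beta$-almost surely a Dirac measure $\delta_{\xi_b}$ whose supporting point $\xi_b$ lies in a single compact $G$-orbit. Using reductivity, decompose $V=\bigoplus_i V_i$ into irreducible $G$-submodules, and for each factor pass to its proximal quotient. On each proximal irreducible summand, Lemma \ref{representationgeometriquecartaniwas} combined with the Law of Large Numbers (Corollary \ref{corinv}) for the Cartan projection forces the normalized product $(b_1\cdots b_n)/\|b_1\cdots b_n\|$ to concentrate on a rank-one operator whose image is a single line; an analogue of Lemma \ref{pasdemasseausousespace} then shows that $\nu$ does not charge proper $G$-invariant subvarieties, pinning down $\nu_b$ as the Dirac mass at the point $\xi_b$ assembled from these contraction lines.

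The main obstacle will be twofold. First, non-proximal irreducible summands do not come with a canonical contraction line, and one must work with the full Iwasawa decomposition to identify $\xi_b$ as a point of a flag variety $G/P$ for a suitable parabolic $P$ arising as the stabilizer of a compatible system of contracting subspaces. Second, once $\xi_b$ is identified, one must show that $G\cdot\xi_b$ is compact and independent of $b$: since $\Stab(\xi_b)$ contains a conjugate of $P$, the orbit is a projective homogeneous space and hence compact; ergodicity of the Bernoulli shift on $B$ together with measurability of $b\mapsto\xi_b$ then forces this compact orbit to be $\beta$-almost surely constant, so $\nu$ is supported on it.
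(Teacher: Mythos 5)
First, a point of comparison: the paper offers no proof of this proposition at all. It is imported verbatim from \cite[Thm 1.7]{Art} and used in Chapter \ref{secunista} as a black box, so your attempt is being measured against the (substantial) external proof of Benoist--Quint rather than against an argument in this text.

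Your converse direction contains a genuine error. You argue that a $\mu$-stationary $\nu$ on a compact orbit is automatically $G$-invariant because ``$\{g\in G\mid g_*\nu=\nu\}$ is a Zariski-closed subgroup containing $\Supp\mu$''. Stationarity only asserts that the \emph{average} $\int_G g_*\nu\,\dd\mu(g)$ equals $\nu$; it does not assert $g_*\nu=\nu$ for individual $g\in\Supp\mu$, so the stabilizer of $\nu$ has no reason to contain $\Supp\mu$. The simplest instance of the proposition already refutes this: for $G=\mathrm{SL}(2,\R)$ acting on $\mathbb{P}(\R^2)$ (a single compact $G$-orbit) with $\mu$ Zariski dense, the unique stationary measure is the Furstenberg measure, which is not $G$-invariant --- indeed no $G$-invariant probability measure exists on $\mathbb{P}(\R^2)$, so your argument would prove that no stationary measure exists either, contradicting the Krylov--Bogolyubov existence you just established. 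Uniqueness on a compact orbit is a proximality statement: the limit measures $\nu_b$ are Dirac masses $\delta_{\xi_b}$ at points determined by the boundary map $b\mapsto\xi_b$ alone, independently of $\nu$, so that $\nu=\int_B\delta_{\xi_b}\,\dd\beta(b)$ is forced; it cannot be reduced to uniqueness of an invariant measure. Your forward-direction sketch does point in the right direction --- it is roughly the shape of the Benoist--Quint argument (limit measures degenerate to Dirac masses parametrized by a flag variety, the stabilizer of $\xi_b$ contains a parabolic so the orbit is compact, shift-ergodicity pins the orbit down) --- but the two ``obstacles'' you name, the non-proximal irreducible summands and the assembly of $\xi_b$ across the isotypic decomposition of $V$, are exactly where the content of the theorem lies and are left unresolved; note also that the ergodicity step is only available after decomposing $\nu$ into ergodic components, since a convex combination of stationary measures carried by two distinct compact orbits is again stationary and is not supported by a single orbit.
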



\small{
\bibliography{recbib}{}
\bibliographystyle{abbrv}
}
\vspace{1em}

{\small\noindent 
Yves Benoist : yves.benoist@math.u-psud.fr \\
Caroline Bru\`ere : caroline.arvis@math.u-psud.fr\\
Universit\'e Paris-Sud, 
Orsay 91405 France}

\end{document}